\documentclass[a4paper]{amsart}
\usepackage[textwidth=14cm,top=3cm, bottom=3cm, hcentering]{geometry}
\usepackage[colorlinks=true,linkcolor=black,citecolor=blue]{hyperref}

\usepackage[english]{babel}
\usepackage{amsmath,amscd,amsthm,amssymb,xypic}
\usepackage{color}
\usepackage[utf8]{inputenc}
\normalfont
\usepackage[T1]{fontenc}
\usepackage{tikz-cd}
\usepackage{tikz}
\usepackage{enumerate}
\usepackage{relsize}
\usepackage{comment}
\usepackage{mathtools}
\usepackage{leftindex}
\usepackage{hyperref}
\usepackage{quiver}
\usetikzlibrary{babel}
\usepackage{enumitem}
\usepackage{mathdots}

\usepackage[export]{adjustbox}
\newcommand{\fitText}[2][1]{\resizebox{#1\textwidth}{!}{#2}}

\newcommand{\fitMath}[2][1]{\resizebox{#1\textwidth}{!}{\(\displaystyle #2\)}}

\newcommand\doq{%
  \mathord{%
    \mspace{1mu}%
    \text{\hspace{0.5ex}\Doq}%
    \mspace{1mu}%
  }%
}

\newcommand\Doq{%
    \tikz[line cap=round,x=1ex,y=1ex,line width=0.3pt]
    {\draw (1,0) |- (0,1) (0.55,0);}%
}

\newtheorem{theo}{Theorem}[section]
\newtheorem{lemm}[theo]{Lemma}
\newtheorem{prop}[theo]{Proposition}

\newtheorem*{theorem*}{Theorem}

\theoremstyle{definition}
\newtheorem{defi}[theo]{Definition}
\newtheorem{rema}[theo]{Remark}

\newtheorem{exas}[theo]{Examples}

\newcommand{\CC}{\mathbb{C}}

\newcommand{\RR}{\mathbb{R}}

\newcommand{\ZZ}{\mathbb{Z}}
\newcommand{\Aa}{\mathcal{A}}
\newcommand{\Bb}{\mathcal{B}}
\newcommand{\Cc}{\mathcal{C}}
\newcommand{\Dd}{\mathcal{D}}
\newcommand{\Ee}{\mathcal{E}}

\newcommand{\p}{\mathfrak{p}}
\newcommand{\q}{\mathfrak{q}}

\newcommand{\kk}{\mathbf{k}}

\newcommand{\del}{\partial}
\newcommand{\delb}{{\bar \partial}}

\newcommand{\A}{H_{\Aa}}
\newcommand{\BC}{H_{\Bb\Cc}}

\newcommand{\xra}[1]{\xrightarrow{#1}}
\newcommand{\lra}{\longrightarrow}

\newcommand{\ov}{\overline}

\newcommand{\Hom}{\mathrm{Hom}}

\newcommand{\Ch}{\mathrm{Ch}_{\kk}}
\newcommand{\Bico}{\mathrm{BiCo}_{\kk}}
\newcommand{\Bicoc}{\mathrm{BiCo}_{\CC}}

\newcommand{\Sch}{\mathcal{B}}
\newcommand{\Inf}{\mathrm{Inf}}
\newcommand{\Le}{\mathrm{L}}

\newcommand{\ele}{\tikz[baseline=0mm]{ 
\draw[thick] (0,0.015) -- (0.2,0.015); 
\draw[thick]  (0,0) -- (0,0.215);
}}

\newcommand{\pr}{\operatorname{pr}}

\newcommand{\mep}{\text{-}}

\thanks{
 P.M. was supported by FCT - Fundação para a Ciência e Tecnologia, I.P. by project 2021.06151.BD with DOI identifier https://doi.org/10.54499/2021.06151.BD.
 A.S-G. was supported by AGAUR, Govern de Catalunya, through the FI Program.
 Both authors acknowledge financial support from the Spanish State Research Agency through projects PID2020-117971GB-C22, PID2024-155646NB-I00 and EUR2023-143450.
}

\author{Pedro Magalhães}
\author{Anna Sopena-Gilboy}
\title[The inflation functor]{The inflation functor\\ in pluripotential homological algebra
}

\begin{document}

\maketitle

\begin{abstract}
 We introduce a functor from cochain complexes to bicomplexes, called \textit{inflation functor}, which sends quasi-isomorphisms to the class of pluripotential weak equivalences. We show this functor is part of a Quillen adjunction. Its right adjoint is a well-known construction in complex geometry, which gives a sheaf-theoretic presentation of Bott-Chern and Aeppli cohomologies. The inflation functor plays a key role in pluripotential Koszul duality theory for operads and allows us to construct the $\infty$-category of bicomplexes in the pluripotential sense.
    
\end{abstract}

\section{Introduction}
Associated with a bicomplex $(A,\del,\delb)$, also called \textit{double complex} in the literature,  one can consider several cohomology theories, each capturing different aspects of the interaction between the two differentials. First, the total differential
$d=\del+\delb$ gives rise to the total cohomology $H^*(A)$.
In addition, taking cohomology with respect to one differential at a time yields the bigraded Dolbeault-type cohomologies
$H^{*,*}_{\partial}(A)$ and $H^{*,*}_{\delb}(A)$.
The row and column filtrations of the total complex induce two spectral sequences whose first pages are isomorphic to \(H^{*,*}_{\delb}(A)\) and \(H^{*,*}_{\partial}(A)\), respectively, and which converge, under boundedness assumptions, to the total cohomology $H^*(A)$.
Beyond these spectral sequences, the bicomplex structure allows for two further natural cohomology theories: \textit{Bott-Chern} \cite{BC} and \textit{Aeppli} \cite{Aep} cohomologies, defined by
\[
H^{*,*}_{\mathcal{BC}}(A)
:= \frac{\ker\partial \cap \ker\delb}
{\operatorname{im}(\partial\delb)}\quad\text{ and }\quad H^{*,*}_{\mathcal{A}}(A)
:= \frac{\ker(\partial\delb)}
{\operatorname{im}\partial + \operatorname{im}\delb}
.\]
These bigraded cohomologies fit naturally between the Dolbeault-type and total cohomologies and encode finer information about the bicomplex.
Specifically, the identity induces the following commutative diagram
\[
\begin{tikzcd}[ampersand replacement=\&]
	\& {H_{\mathcal{BC}}^{**}(A)} \\
	{H_{\del}^{**}(A)} \& {H^*_{dR}(A)} \& {H_{\delb}^{**}(A)} \\
	\& {H^{**}_{\mathcal{A}}(A)}
	\arrow[from=1-2, to=2-1]
	\arrow[from=1-2, to=2-2]
	\arrow[from=1-2, to=2-3]
	\arrow[Rightarrow, from=2-1, to=2-2]
	\arrow[from=2-1, to=3-2]
	\arrow[from=2-2, to=3-2]
	\arrow[Rightarrow, from=2-3, to=2-2]
	\arrow[from=2-3, to=3-2]
\end{tikzcd}
\]
where the horizontal double arrows denote the row and column spectral sequences mentioned above, also known as the \textit{Frölicher spectral sequences}.

This diagram has been extensively studied in the context of non-K\"ahler geometry. Indeed, a primary source of bicomplexes arises from the complexified de Rham algebra of a complex manifold, which naturally decomposes into forms of type \((p,q)\). It is well known that compact K\"ahler manifolds satisfy the \(\del\delb\)-lemma, which is equivalent to the statement that any (and hence all) of the maps in the above diagram are isomorphisms (see for instance \cite{Angella14}). 
In particular, Bott--Chern and Aeppli cohomologies are especially interesting biholomorphic invariants for non-K\"ahler manifolds. They play important roles in the study of special metrics, such as balanced, Gauduchon, and strong Kähler with torsion (SKT) metrics, and have direct applications in deformation theory and in mathematical physics, for instance in the Hull-Strominger system arising in string theory \cite{GFGS26}. 

Various recent developments in complex geometry are related to the consideration of a new class of weak equivalences on the category of bicomplexes, called \textit{pluripotential weak equivalences} \cite{MiSte24}, \cite{pluri}, \cite{CiLivWhi25}, \cite{CiGaSo25}, \cite{SGtesi}. These are defined by those maps of bicomplexes inducing isomorphisms in both Bott--Chern and Aeppli cohomologies.
Associated with these weak equivalences, there is a finer notion of formality (in the sense of rational homotopy theory), with interesting consequences in complex geometry, both in the Kähler and non-Kähler settings.
This is the subject of several recent works
\cite{AnTo15}, \cite{TaTo17}, \cite{SfeTo22}, \cite{SfeTo24},\cite{Ru25}, \cite{Jonas1}, \cite{Jonas2}.

The category $\mathrm{BiCo}_\kk$ of bicomplexes over a field $\kk$ admits a model structure with pluripotential weak equivalences \cite{pluri}.  
In this note, we introduce the \textit{inflation functor} as part of a further structural development of pluripotential homotopy theory. 
This is a functor 
\[ \mathrm{Inf} : \mathrm{Ch}_\kk \to \mathrm{BiCo}_\kk\] 
from cochain complexes to bicomplexes, which sends quasi-isomorphisms to pluripotential weak equivalences and is conservative.
We show it admits a right adjoint, which we identify with a well-known sheaf-theoretic construction in complex geometry. This construction was originally introduced by Bigolin \cite{zbMATH03320556} (see also   \cite{demailly} and \cite{schweizer}) 
as a functor from complex manifolds to complexes of locally free sheaves. It has the property that Bott-Chern and Aeppli cohomologies arise as cohomologies, in certain degrees, of this complex. The same construction gives 
a functor $\Bb$ from bicomplexes to cochain complexes with similar properties.
Hodge-theoretic aspects of this functor have recently been developed in \cite{StelzigSchRemarks} and \cite{Piovani}, where the authors refer to it as the \textit{Schweitzer complex} and the \textit{Bigolin complex}, respectively. Given that the construction was first introduced by Bigolin, the latter designation is arguably the more natural. We will use $\Bb$ to denote this functor.
Over a field of characteristic zero, the inflation functor allows us to place $\Bb$ within a Quillen adjunction, thereby giving its definition a natural homotopical meaning.
We prove:

\begin{theorem*}
    There is a Quillen adjunction 
    \[
    \begin{tikzcd}[ampersand replacement=\&]
	{\Inf:\Ch} \& {\Bico:\Sch}\;.
	\arrow[shift left, from=1-1, to=1-2]
	\arrow[shift left, from=1-2, to=1-1]
    \end{tikzcd}
    \]
\end{theorem*}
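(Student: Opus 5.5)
The most economical route is to verify that the right adjoint $\Sch$ is a right Quillen functor. That is, $\Sch$ should send fibrations of $\Bico$ to fibrations of $\Ch$ and trivial fibrations to trivial fibrations. Here $\Ch$ carries the projective model structure: weak equivalences are quasi-isomorphisms and fibrations are degreewise surjections, which over a field are all maps surjective in every degree. $\Bico$ carries the pluripotential model structure of \cite{pluri}. I first need the adjunction $\Inf\dashv\Sch$ itself. I will obtain it by writing $\Inf(V)$ explicitly as a direct sum, over degrees, of copies of $V^n$ placed along the relevant bidegrees, with $\del$ and $\delb$ assembled from $d_V$ and identities. I then check directly that a bicomplex map $\Inf(V)\to A$ amounts to a family of linear maps $V^n\to \bigoplus_{p+q=\ast}A^{p,q}$ landing in the appropriate truncated pieces of $A$. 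Those pieces are exactly the components of the Bigolin complex $\Sch(A)$, so such a family is the same as a chain map $V\to\Sch(A)$. Naturality in both variables is formal once the bijection is written on components.

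For the fibration part, I use the description of fibrations in $\Bico$ from \cite{pluri}: maps with the right lifting property against the generating trivial cofibrations, namely inclusions $0\to$ (contractible indecomposable bicomplexes, i.e. squares). Lifting against a square $0\to\square^{p,q}$ says exactly that $f\colon A\to B$ is surjective in bidegree $(p,q)$. So a fibration is surjective in every bidegree, hence in every degree of the total space. $\Sch(A)$ is built degreewise from finite direct sums of the $A^{p,q}$ (quotients appear only in degrees where one takes $A/\mathrm{im}$-type pieces, and surjections pass to quotients), so $\Sch(f)$ is surjective in each degree. Thus $\Sch$ preserves fibrations.

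For trivial fibrations, the key input is the property recalled in the introduction: the cohomology of $\Sch(A)$ recovers Bott--Chern cohomology in some degrees and Aeppli cohomology in the others, functorially. A pluripotential weak equivalence induces isomorphisms on $\BC$ and $\A$, so $\Sch(f)$ is a quasi-isomorphism. Combined with the previous step, this shows $\Sch$ preserves trivial fibrations. Strictly speaking, the cohomology statement is all that is needed here; surjectivity comes from the fibration step.

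The main obstacle is making the cohomological identification $H^n(\Sch(A))\cong \BC$ or $\A$ precise and natural for \emph{all} bicomplexes, including unbounded ones. This requires a careful choice of truncation conventions in $\Sch$. I would do it by decomposing $A$ into indecomposables (squares and zigzags) over the field $\kk$ and computing $\Sch$ on each summand, since $\Sch$ commutes with direct sums degreewise. If this becomes unwieldy, the fallback is the dual check: show that $\Inf$ sends the generating cofibrations $S^{n-1}\to D^n$ to pluripotential cofibrations, and sends $0\to D^n$ to $0\to\Inf(D^n)$ with $\Inf(D^n)$ a direct sum of squares, hence a trivial cofibration.
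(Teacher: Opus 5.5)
\paragraph{Main gap.} Your overall plan is the paper's: establish $\Inf\dashv\Sch$, then show that $\Sch$ preserves fibrations and trivial fibrations. The fibration step is fine; in fact $\Sch$ involves no quotients, since each $\Sch^k(A)$ is a finite direct sum of components $A^{r,s}$.

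The gap is the input you use for trivial fibrations, which is false. In general, $H^*(\Sch A)$ is a Bott--Chern group only in degree $0$ and an Aeppli group only in degree $-1$. For $k\geq 1$, $H^k(\Sch A)$ is the total cohomology of the first-quadrant truncation $\bigoplus_{r,s\geq 0}A^{r,s}$. For $k\leq -2$ it is, up to shift, the total cohomology of the quotient bicomplex $\bigoplus_{r,s<0}A^{r,s}$. For a concrete example, let $A$ be spanned by $x\in A^{0,0}$, $y=\del x\in A^{1,0}$ and $z=\delb x\in A^{0,1}$. Then $H^1(\Sch A)\cong\kk$, while $\bigoplus_{p+q=1}H^{p,q}_{\mathcal{BC}}(A)\cong\kk^2$ and $\bigoplus_{p+q=1}H^{p,q}_{\mathcal{A}}(A)=0$. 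No choice of truncation conventions helps, because $\Sch$ is fixed by the statement.

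So ``$f$ induces isomorphisms on Bott--Chern and Aeppli cohomology, hence $\Sch(f)$ is a quasi-isomorphism'' is unjustified outside degrees $0,-1$. What you need is that every $H^k\circ\Sch$ inverts pluripotential weak equivalences. The paper gets this from Corollary 1.25 of \cite{pluri}: an additive functor vanishing on pluripotentially acyclic bicomplexes inverts pluripotential weak equivalences. It combines this with the fact from \cite{StelzigSchRemarks} that $H^*\circ\Sch$ vanishes on such bicomplexes. Your decomposition into squares and zigzags can check that vanishing on objects. However, the decomposition is not natural, so by itself it says nothing about an arbitrary map $f$; you still need a result like that corollary.

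\paragraph{Your fallback works.} Your fallback is a complete and correct argument, and it is more elementary than the paper's route. As a bigraded space $\Inf(C)=\bigoplus_n\ele_n\otimes C^n$, so $\Inf$ preserves injections and sends the sphere-into-disk generating cofibrations to cofibrations. Moreover, $\Inf(D^n)$ is a direct sum of squares, being freely generated in its lowest bidegrees. Hence it is pluripotentially acyclic, and $0\to\Inf(D^n)$ is a trivial cofibration. The standard criterion for cofibrantly generated model categories then makes $\Inf$ left Quillen, with no appeal to Corollary 1.25 of \cite{pluri} or to the cohomology of the Bigolin complex. Since every bicomplex is fibrant, Ken Brown's lemma even recovers that $\Sch$ preserves all weak equivalences. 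You should make this your main argument.

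\paragraph{Adjunction step.} Recording the images of the lowest-bidegree generators does not give a chain map into $\Sch(A)$ on the nose. Take $v$ of degree $1$ and let $a^{p,q}$ be the image of $(p,q)_1\otimes v$. The bicomplex relations force the image of $(1,1)_2\otimes dv$ to be $\tfrac12(\del a^{0,1}+\delb a^{1,0})$. The Bigolin differential instead has $(1,1)$-component $\del a^{0,1}+\delb a^{1,0}$. You must rescale componentwise, by binomial coefficients in degrees $n\geq 0$ and by Leibniz-triangle coefficients in negative degrees, as in the proofs of Theorem~\ref{adj-theo}. This is where characteristic zero is used.
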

Here, $\Ch$ is endowed with the projective model structure and \(\Bico\) with the pluripotential model structure of \cite{pluri}.

Furthermore, we show that, when restricted to non-positively graded cochain complexes, the inflation functor is oplax symmetric monoidal. Moreover, the natural map
\[
\Inf(C\otimes D)\to \Inf(C)\otimes \Inf(D)
\]
is a pluripotential weak equivalence for all cochain complexes \(C\) and \(D\) in \(\Ch^{\leq 0}\). 
This allows us to show that the functor $\Bb$ restricted to third-quadrant bicomplexes is lax symmetric monoidal.
The inflation functor and its monoidal properties are used in \cite{SGtesi} as a key tool for a theory of
pluripotential Koszul duality for quadratic operads. 
In this note, we use the above results to produce a dg-enrichment of the category of bicomplexes, which leads to an explicit description of the $\infty$-category of bicomplexes localizing at pluripotential weak equivalences. For the applications we have in mind, we restrict our study to the case when the field $\kk$ is of characteristic zero. This gives a model for the inflation functor that enjoys full symmetry with respect to the two differentials, in sync with the pluripotential $A_\infty$-models developed in \cite{SGtesi}. Other models for the inflation functor may be more suitable for complexes over a ring.

\subsection*{Acknowledgments}
We thank Muriel Livernet for her insights on first proof of Theorem~\ref{adj-theo}. 
We also thank Geoffroy Horel and David Martínez-Carpena for discussions on Section~\ref{sec:InftyCat}. 
Finally, we thank Joana Cirici and Jonas Stelzig for being our great advisors (and for many fruitful discussions).

\section{Preliminaries}

We review basic concepts and properties concerning the category of bicomplexes and establish conventions and notation to be used in the subsequent sections. 
Throughout the paper, \(\kk\) will denote a field of characteristic zero.

\begin{defi}
    A \emph{bicomplex} \((A,\del,\delb)\) consists of a bigraded \(\kk\)-vector space \(A=\bigoplus A^{*,*}\) together with two differentials $\del$ and $\delb$ of bidegrees $(1,0)$ and $(0,1)$, respectively, that anti-commute:
    \[\del^2 = 0, \;\,\delb^2 = 0,\;\,\text{ and }
        \del \delb = - \delb \del.
    \]
\end{defi}
    A morphism of bicomplexes is a morphism of vector spaces respecting the bigrading and commuting with both differentials. We denote by $\Bico$ the category of bicomplexes.

The usual tensor product of cochain complexes extends naturally to bicomplexes 
\begin{equation*}
    (A \otimes B)^{p,q} = \bigoplus_{\substack{r+s = p \\ t + u = q}} A^{r,t} \otimes B^{s,u} 
\end{equation*}
making $\Bico$ into a symmetric monoidal category. The category $\Bico$ is also enriched over itself
via the internal Hom
defined by
\[\underline{\Hom}_{\Bico}(A,B)^{p,q}:=\prod_{r,s\in \ZZ} \Hom_{\kk}(A^{p,q},B^{p-r,q-s})\]
with differentials
\[\del f:=\del_{B}f-(-1)^{|f|}f\del_{A}\quad\text{  and  }\quad\delb f:=\delb_{B}f-(-1)^{|f|}f\delb_{A}.\]
Internal Hom and tensor product form an adjunction
\[
(\_ \otimes A):\Bico\leftrightarrows \Bico: \underline{\Hom}(A, \_).
\]

The following construction was introduced by Bigolin \cite{zbMATH03320556} (see also
\cite{demailly} and \cite{schweizer}). 
In our convention, we differ by a shift in degree by one.

\begin{defi}Let $(A,\del,\delb)$ be a bicomplex. For each $(p,q)\in\ZZ^2$, define a cochain complex
 $(\Bb_{p,q}(A),d_{\Bb})$ as follows: as a graded vector space, we let
    \begin{align*}
        \Bb_{p,q}^k(A)&:=\bigoplus_{\substack{r+s=k-1\\r<p,s<q}} A^{r,s} &\text{if }k\leq p+q-1\\
        \Bb_{p,q}^k(A)&:=\bigoplus_{\substack{r+s=k\\r\geq p,s\geq q}} A^{r,s} &\text{if }k\geq p+q.
    \end{align*}
The differential $d_{\Bb}$ is given by:
\[
\cdots\overset{\pr\circ d}{\longrightarrow}\Bb_{p,q}^{p+q-2}(A)\overset{\pr\circ d}{\longrightarrow}\Bb_{p,q}^{p+q-1}(A)\overset{\del\delb}{\longrightarrow}\Bb_{p,q}^{p+q}(A)\overset{d}{\longrightarrow}\Bb_{p,q}^{p+q+1}(A)\overset{d}{\longrightarrow}\cdots
,\]
where \(\pr\) denotes projection from the bicomplex \(A\) to the direct summand \(\Bb_{p,q}^k(A)\) and \(d=\del+\delb\).
\end{defi}

By construction, we have isomorphisms
\[
H^{p,q}_{\mathcal{BC}}(A)\cong H^{p+q}(\Bb_{p,q}^*(A))
\quad \text{
and }\quad
H^{p,q}_{\mathcal{A}}(A)\cong H^{p+q-1}(\Bb_{p,q}^*(A)).
\]

Pictorially, the complex $\Bb_{p,q}$ looks as follows:
\[
\fitText{
\begin{tikzcd}[ampersand replacement=\&]
	\&\&\& {A^{p,q+2}} \& {A^{p+1,q+2}} \& \iddots \\
	\&\&\& {A^{p,q+1}} \& {A^{p+1,q+1}} \& {A^{p+2,q+1}} \\
	\&\&\& {A^{p,q}} \& {A^{p+1,q}} \& {A^{p+2,q}} \\
	{A^{p-3,q-1}} \& {A^{p-2,q-1}} \& {A^{p-1,q-1}} \&\& { p+q} \& {p+q+1} \&  \dots \\
	{A^{p-3,q-2}} \& {A^{p-2,q-2}} \& {A^{p-1,q-2}} \& {p+q-1} \\
	\iddots \& {A^{p-2,q-3}} \& {A^{p-1,q-3}} \& {p+q-2} \\
	\&\&\& \vdots
	\arrow[from=1-4, to=1-5]
	\arrow[from=2-4, to=1-4]
	\arrow[from=2-4, to=2-5]
	\arrow[from=2-5, to=1-5]
	\arrow[from=2-5, to=2-6]
	\arrow[from=3-4, to=2-4]
	\arrow[from=3-4, to=3-5]
	\arrow[from=3-5, to=2-5]
	\arrow[from=3-5, to=3-6]
	\arrow[from=3-6, to=2-6]
	\arrow[from=4-1, to=4-2]
	\arrow[from=4-2, to=4-3]
	\arrow["{\del\delb}", from=4-3, to=3-4]
	\arrow[from=5-1, to=4-1]
	\arrow[from=5-1, to=5-2]
	\arrow[from=5-2, to=4-2]
	\arrow[from=5-2, to=5-3]
	\arrow[from=5-3, to=4-3]
	\arrow[from=6-2, to=5-2]
	\arrow[from=6-2, to=6-3]
	\arrow[from=6-3, to=5-3]
\end{tikzcd}}
\]

\begin{rema}
    Let \(A\) be a bicomplex. Denote by \(A[p,q]\) the bicomplex given by
    \[(A[p,q])^{r,s}=A^{r-p,s-q}.\]
    Note that \[\Bb_{p,q}(A)=\Bb_{0,0}(A[-p,-q]).\]
\end{rema}
From now on, \(\Sch\) will denote \(\Bb_{0,0}\).
\begin{defi}
    A morphism of bicomplexes $f : A \to B$ is said to be a \textit{pluripotential weak equivalence} if $H_{\mathcal{BC}}(f)$ and $H_{\mathcal{A}}(f)$ are both isomorphisms.
\end{defi}

There is a pluripotential notion of homotopy between morphisms of bicomplexes, which will be of interest:
\begin{defi}
    \label{hombico-defi}
    Let \(f,g\colon A\to B\) be two morphisms of bicomplexes. A \textit{pluripotential homotopy} from $f$ to \(g\) is a linear map $h\colon A \to B$ of bidegree $(-1,-1)$ such that 
    \begin{equation*}
        [\del, [\delb, h]] = f - g.
    \end{equation*}
    If there exists a homotopy from $f$ to $g$ we write $f \sim g$. A morphism $f$ in $\Hom_{\Bico}(A,B)$ is said to be a \textit{pluripotential homotopy equivalence} if there exists $g$ in $\Hom_{\Bico}(B,A)$ such that $fg \sim id_B$ and $gf \sim id_A$. 
\end{defi}

By Theorem $1.21$ of \cite{pluri}, the class of pluripotential weak equivalences coincides with the class of homotopy equivalences. Moreover, there is a cofibrantly generated model structure on $\Bico$ whose weak equivalences are the pluripotential weak equivalences. The cofibrations are the injective morphisms, and the fibrations are the surjective morphisms. We will refer to this as the \textit{pluripotential model structure}. The tensor product and internal Hom endow \(\Bico\) with the structure of a symmetric monoidal model category, and hence \(\Bico\) is a \(\Bico\)-enriched model category.

\begin{lemm}\label{lemm:SchWeakEq}
Consider on $\Bico$ the pluripotential model structure and on $\Ch$
the projective model structure.
    The functor $\Sch\colon \Bico \to \Ch$ preserves weak equivalences and fibrations.  
\end{lemm}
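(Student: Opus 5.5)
Two things need to be shown: that $\Sch$ sends pluripotential weak equivalences to quasi-isomorphisms, and that it sends surjections of bicomplexes to degreewise surjections of cochain complexes. The fibrations in the projective model structure on $\Ch$ are precisely the degreewise surjections, and the pluripotential fibrations are the surjective morphisms, so both parts can be checked directly on the explicit description of $\Sch=\Bb_{0,0}$.

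\emph{Fibrations.} Let $f\colon A\to B$ be a surjective morphism of bicomplexes. In each degree $k$, the map $\Sch^k(f)$ is the direct sum of the components $f^{r,s}\colon A^{r,s}\to B^{r,s}$ over the index set defining $\Sch^k$: either $r+s=k-1$ with $r,s<0$, or $r+s=k$ with $r,s\geq 0$. Each component is surjective, so $\Sch^k(f)$ is surjective. Functoriality of $\Sch$ on morphisms is clear, because $f$ commutes with $\del$, $\delb$ and with the projections $\pr$ onto bigraded summands.

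\emph{Weak equivalences.} By Theorem 1.21 of \cite{pluri}, a pluripotential weak equivalence is a pluripotential homotopy equivalence. It therefore suffices to show that $\Sch$ sends pluripotentially homotopic maps to chain homotopic maps, since a homotopy equivalence is then sent to a chain homotopy equivalence, which is a quasi-isomorphism.

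So let $h$ be a bidegree $(-1,-1)$ map with $[\del,[\delb,h]]=f-g$, i.e.\ $f-g=\del\delb h+\delb h\del-\del h\delb+h\del\delb$ up to the sign conventions. We build a chain homotopy $H\colon \Sch^k(A)\to\Sch^{k-1}(B)$ piece by piece.
\begin{itemize}
\item On the upper part ($k\geq 0$, components $A^{r,s}$ with $r,s\geq 0$), take $H$ to be $\pm\delb h$ on $A^{r,s}$ with $s\geq1$ and $\pm\del h$ on $A^{r,s}$ with $r\geq1$, suitably combined (for instance $H=\delb h+h\del$ type terms) so that $dH+Hd=f-g$ holds with $d=\del+\delb$.
\item On the lower part ($k\leq -1$, components with $r,s<0$), use the dual formulas $\pr\circ(h\delb\pm\del h)$ and similar.
\item At the junction, degrees $-1\to 0$ with differential $\del\delb$, take $H\colon \Sch^0(A)\to\Sch^{-1}(B)$ to be $h$ itself: $h$ sends $A^{0,0}$ to $B^{-1,-1}$, which lies in $\Sch^{-1}$.
\end{itemize}

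Checking these identities degree by degree, especially at the junction, is the main obstacle. In degree $0$ on $A^{0,0}$ we need $\del\delb h+H\,d=f-g$, and the terms $\delb h\del$ and $\del h\delb$ must come from $H$ applied to $d(a)\in A^{1,0}\oplus A^{0,1}$. Bidegree considerations, with components landing in negative bidegrees being killed by $\pr$, should make the terms match. If the explicit homotopy becomes unwieldy, a fallback is to use the isomorphisms $H^k(\Sch A)\cong H_{\mathcal{BC}}^{0,0}$ and $H_{\mathcal{A}}^{0,0}$ in degrees $0$ and $-1$. For the remaining degrees, one identifies $H^k(\Sch A)$ for $k\geq1$ with the $d$-cohomology of the truncated total complex $\bigoplus_{r,s\geq0}A^{r,s}$, and similarly for $k\leq -2$. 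One then shows these are determined by Bott–Chern and Aeppli groups, either via the decomposition of bicomplexes into squares and zigzags or via the homotopy argument above.
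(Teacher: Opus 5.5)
Your fibration argument is the paper's and is fine. For weak equivalences you take a different route from the paper. The paper applies \cite[Cor.~1.25]{pluri}: an additive functor that kills pluripotentially acyclic bicomplexes inverts pluripotential weak equivalences. It combines this with the result of \cite{StelzigSchRemarks} that $H^*\circ\Sch$ kills such bicomplexes. You instead use \cite[Thm.~1.21]{pluri} and want $\Sch$ to turn pluripotential homotopies into chain homotopies.

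That reduction is sound, and the claim is true. But the step carrying all the content is not done, so as written the weak-equivalence half is not proved. The recipe you sketch also fails as stated:
\begin{itemize}
\item $\delb h$ has bidegree $(-1,0)$, so it can only be used on $A^{r,s}$ with $r\geq 1$, not $s\geq 1$.
\item The terms you suggest ($\pm\delb h$, $\pm\del h$, ``$\delb h+h\del$'') do not satisfy $d_{\Bb}H+Hd_{\Bb}=\Sch(f)-\Sch(g)$. For instance, $[d,\delb h]$ misses $-\del h\delb+h\del\delb$ and contains a spurious $\delb h\delb$.
\item The fallback does not close the gap either. Knowing that the dimensions of $H^k(\Sch A)$ are determined by Bott--Chern and Aeppli data through a non-natural square/zigzag decomposition does not show that the specific map $\Sch(f)$ is a quasi-isomorphism. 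For that you need a naturality statement like \cite[Cor.~1.25]{pluri}, which is exactly the paper's proof.
\end{itemize}

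The homotopy can be written down explicitly. Put $K=[\delb,h]=\delb h-h\delb$. Since $[\delb,K]=0$, you get $[d,K]=[\del,K]=[\del,[\delb,h]]=f-g$ on all of $A$. The truncation in $\Sch$ then only causes errors along the columns $r=0$ and $r=-1$. Define $H\colon\Sch^k(A)\to\Sch^{k-1}(B)$ on the summand $A^{r,s}$ by
\[
H=\begin{cases}
K & r\geq 1,\ s\geq 0,\\
-\del h & r=0,\ s\geq 1,\\
h & r=s=0,\\
K+h\del & r=-1,\ s\leq -1,\\
K & r\leq -2,\ s\leq -1.
\end{cases}
\]
The checks go region by region:
\begin{itemize}
\item For $r\geq 1$ and for $r\leq -2$, the identity reduces to $[d,K]=f-g$. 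In the lower region, the out-of-range components of $dKa$ and $K\,da$ cancel because $[\delb,K]=0$.
\item On $A^{0,s}$ with $s\geq 1$, one gets $\del\delb h a+K\del a-\del h\delb a=(f-g)a$.
\item At $A^{0,0}$ the same expression appears, since $d_{\Bb}H^0=\del\delb h$. So your junction choice $H^0=h$ is correct.
\item On the column $r=-1$, $\pr dK+K\pr d$ differs from $f-g$ exactly by $-K\del=-(\delb h\del+h\del\delb)$. The correction $h\del$ supplies these two terms; on $A^{-1,-1}$ the term $h\del\delb$ comes instead from $H^0\circ\del\delb$.
\end{itemize}

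With this $H$ your argument is complete. Compared with the paper's proof, yours is elementary and self-contained: it needs only \cite[Thm.~1.21]{pluri}, and it also shows that $\Sch$ preserves homotopies at chain level. The paper's proof is shorter but rests on two external results.
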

\begin{proof}
    Corollary 1.25 of \cite{pluri} states that if \(f\colon A \to B\) is a pluripotential weak equivalence, then \(H(f)\) is an isomorphism for any additive functor \(H \colon \Bico \to V,\)
    where \(V\) is an additive category and \(H(P)=0\) for every acyclic bicomplex \(P\); that is,
    \(\A(P)=\BC(P)=0\) (see Definition 1.23 in \cite{pluri}).
    
    By \cite{StelzigSchRemarks}, the functor \(H^* \circ \Sch\), where \(H^*\) denotes taking cohomology of a cochain complex, satisfies this condition. It follows that \(\Sch\) preserves weak equivalences. Moreover, in both model categories \(\Bico\) and \(\Ch\), fibrations are simply (bi)degree-wise surjections. Since \(\Sch\), viewed as a graded vector space, is given by the totalization of the restricted bicomplex, it preserves surjections.

\end{proof}

It thus follows that if $\Sch$ is a right adjoint, then there is a Quillen adjunction between $\Bico$ and $\Ch$. This is the content of the next section.

\section{The inflation functor}

In this section we define the \emph{inflation functor} from cochain complexes to bicomplexes and show its right adjoint is $\Bb$. Moreover, we prove that the inflation functor and $\Bb$ form a Quillen pair.

Let us consider the following family of bicomplexes, $\{\ele_n\}_{n\in \ZZ}$, where
\begin{equation}
    \label{eles-eq}
    \ele_n^{ij}=
\begin{cases}
    \kk \text{ if } i,j,n \geq 0 \text{ and } i + j =  n \\
    \kk \text{ if } i,j > 0, n\geq0\text{ and } i+j =  n+1,\\
    \kk\text{ if } i,j \leq 0, n < 0 \text{ and } i+j = n\\
    \kk\text{ if } i,j,n < 0 \text{ and } i+j = n-1\\
    0\text{ otherwise}.
\end{cases}
\end{equation}
Denoting the unit of \(\kk\subseteq \ele_n\) 
sitting in bidegree $(i,j)$ by $(i,j)_n$, define
\begin{equation}
    \label{eledif-eq}
    \del(i,j)_n=j(i+1,j)_n\;\;\text{   and  }\;\;\delb(i,j)_n=-i(i,j+1)_n.
\end{equation}

\begin{exas}
We show some examples for different values of \(n\).

\begin{itemize}[leftmargin=*]
    \item For \(|n|=1\) we have:
    \[
    \ele_1=
    \begin{tikzcd}
        \kk \arrow[r, "\cdot1"] & \kk \\
          & \kk \arrow[u,"\cdot(-1)"]
        \end{tikzcd}\qquad\qquad\text{and}\qquad\qquad
    \ele_{-1}=
    \begin{tikzcd}
        \kk  &   \\
         \kk \arrow[u, "\cdot1"] \arrow[r,"\cdot(-1)"]& \kk\;.
    \end{tikzcd}
    \]
    \item For \(|n|=2\) we have:
    \[     
    \ele_{2}=
     \begin{tikzcd}[ampersand replacement=\&]
    	\kk \& \kk \\
    	\& \kk \& \kk \\
    	\&\& \kk
    	\arrow["{\cdot 2}", from=1-1, to=1-2]
    	\arrow["{\cdot(-1)}", from=2-2, to=1-2]
    	\arrow["{\cdot 1}", from=2-2, to=2-3]
    	\arrow["{\cdot(-2)}", from=3-3, to=2-3]
    \end{tikzcd}\qquad\text{and}\qquad
    \ele_{-2}=
    \begin{tikzcd}
        \kk  &  &  \\
         \kk \arrow[u, "\cdot2"] \arrow[r,"\cdot(-1)"]& \kk & \\
          & \kk \arrow[r,"\cdot(-2)"] \arrow[u,"\cdot1"] & \kk\;.
    \end{tikzcd}
    \]
    \item For \(n=-3\) we have:
    \[
    \ele_{-3}=
    \begin{tikzcd}[ampersand replacement=\&]
	\kk \\
	\kk \& \kk \\
	\& \kk \& \kk \\
	\&\& \kk \& \kk\;.
	\arrow["\cdot3", from=2-1, to=1-1]
	\arrow["{\cdot(-1)}"', from=2-1, to=2-2]
	\arrow["\cdot2", from=3-2, to=2-2]
	\arrow["{\cdot(-2)}"', from=3-2, to=3-3]
	\arrow["\cdot1", from=4-3, to=3-3]
	\arrow["{\cdot(-3)}"', from=4-3, to=4-4]
    \end{tikzcd}
    \]    
\end{itemize}       
\end{exas}

\begin{rema}
Let us note that tensoring with the  bicomplex $\ele_{-1}$ defines a \textit{shift functor} making the homotopy category of bicomplexes, defined by localizing at pluripotential weak equivalences, into a triangulated category. This shift has a homotopy inverse in the pluripotential sense, defined by tensoring with $\ele_1$. Moreover, there are pluripotential homotopy equivalences
\[\ele_1^{\otimes n}\simeq \ele_n\quad \text{ and }\quad \ele_{-1}^{\otimes n}\simeq \ele_{-n}\quad \text{ for }\quad n\geq 1.\]
\end{rema}

We define a functor
\[\Inf: \Ch \to \Bico.\]
Given a cochain complex
\[\dots \xrightarrow{d} C^{n-1} \xrightarrow{d} C^{n} \xrightarrow{d} C^{n+1} \xrightarrow{d} \dots,\]
we set
\[\Inf(C) = \left( \bigoplus_{n \in \mathbb{Z}} \ele_n \otimes C^n, \del, \delb \right),\]
where we consider $C^n$ as a bicomplex sitting in bidegree $(0,0)$. The differentials \(\del\) and \(\delb\) are defined by the following:
\begin{equation}
    \label{infdif-eq}
    \del((i,j)_n \otimes c) = \del(i,j)_n \otimes c + (-1)^{i+j+n} (i+1, j)_{n+1} \otimes d c,
\end{equation}
\[\delb((i,j)_n \otimes c) = \delb(i,j)_n  \otimes c + (-1)^{i+j+n} (i, j+1)_{n+1} \otimes d c.\]
A direct computation shows that this construction forms a bicomplex. We refer to this functor as the \textit{inflation} functor. Pictorially, the inflation functor looks as follows:

\[\fitText{\begin{tikzcd}[ampersand replacement=\&]
	\&\&\& {C^{2}} \& {C^2\oplus C^3} \& \iddots \\
	\&\&\& {C^{1}} \& {C^{1}\oplus C^{2}} \& {C^2\oplus C^3} \\
	{C^{-3}} \& {C^{-2}} \& {C^{-1}} \& {C^{0}} \& {C^{1}} \& {C^{2}} \\
	{C^{-3}\oplus C^{-4}} \& {C^{-2}\oplus C^{-3}} \& {C^{-1}\oplus C^{-2}} \& {C^{-1}} \\
	\iddots \& {C^{-3}\oplus C^{-4}} \& {C^{-2}\oplus C^{-3}} \& {C^{-2}} \\
	\&\iddots \& {C^{-3}\oplus C^{-4}} \& {C^{-3}}
	\arrow[shift left, draw={blue}, from=1-4, to=1-5]
	\arrow[shift right, from=1-4, to=1-5]
	\arrow[from=2-4, to=1-4]
	\arrow[shift left, draw={blue}, from=2-4, to=2-5]
	\arrow[shift right, from=2-4, to=2-5]
	\arrow[shift right, draw={blue}, from=2-5, to=1-5]
	\arrow[shift left, from=2-5, to=1-5]
	\arrow[shift left, draw={blue}, from=2-5, to=2-6]
	\arrow[shift right, from=2-5, to=2-6]
	\arrow[from=3-1, to=3-2]
	\arrow[from=3-2, to=3-3]
	\arrow[from=3-3, to=3-4]
	\arrow[from=3-4, to=2-4]
	\arrow[from=3-4, to=3-5]
	\arrow[shift right, draw={blue}, from=3-5, to=2-5]
	\arrow[shift left, from=3-5, to=2-5]
	\arrow[from=3-5, to=3-6]
	\arrow[shift right, draw={blue}, from=3-6, to=2-6]
	\arrow[shift left, from=3-6, to=2-6]
	\arrow[shift right, from=4-1, to=3-1]
	\arrow[shift left, draw={blue}, from=4-1, to=3-1]
	\arrow[shift right, draw={blue}, from=4-1, to=4-2]
	\arrow[shift left, from=4-1, to=4-2]
	\arrow[shift left, draw={blue}, from=4-2, to=3-2]
	\arrow[shift right, from=4-2, to=3-2]
	\arrow[shift right, draw={blue}, from=4-2, to=4-3]
	\arrow[shift left, from=4-2, to=4-3]
	\arrow[shift left, draw={blue}, from=4-3, to=3-3]
    \arrow[shift right, from=4-3, to=3-3]
	\arrow[shift left,  from=4-3, to=4-4]
    \arrow[shift right, draw={blue}, from=4-3, to=4-4]
	\arrow[from=4-4, to=3-4]
	\arrow[shift left, draw={blue}, from=5-2, to=4-2]
	\arrow[shift right, from=5-2, to=4-2]
	\arrow[shift right, draw={blue}, from=5-2, to=5-3]
	\arrow[shift left, from=5-2, to=5-3]
	\arrow[shift left, draw={blue}, from=5-3, to=4-3]
	\arrow[shift right, from=5-3, to=4-3]
	\arrow[shift left, from=5-3, to=5-4]
	\arrow[shift right, draw={blue}, from=5-3, to=5-4]
	\arrow[from=5-4, to=4-4]
	\arrow[shift left, draw={blue}, from=6-3, to=5-3]
	\arrow[shift right, from=6-3, to=5-3]
	\arrow[shift left, from=6-3, to=6-4]
	\arrow[shift right, draw={blue}, from=6-3, to=6-4]
	\arrow[from=6-4, to=5-4]
\end{tikzcd}}\]
 Here, the blue arrows denote the differentials of the bicomplexes \(\ele_n\otimes C^n\). The black arrows are the "connecting" differentials of the inflation functor given by the differential \(d\) of \(C\).
 
\begin{exas} We compute the inflation functor on simplicial cochain complexes:
\begin{itemize}[leftmargin=*]
    \item Consider the cochain complex $C_*(\Delta^1)$
   \begin{equation*}
        \underset{-1}{\kk\langle c\rangle} \xrightarrow{d} \underset{0}{\kk\langle c_1\rangle\oplus\kk\langle c_2\rangle} \qquad d(c)=c_2-c_1.
    \end{equation*}
    $\Inf(C_*(\Delta^1))$ is given by
        \[
        \begin{tikzcd}
        \kk \arrow[r, "d"] & \kk\oplus\kk \\
        \kk\arrow[r, "-id"] \arrow[u,"id"] & \kk \arrow[u,"d"]
        \end{tikzcd},
        \]
    so that $\del\delb c = c_2-c_1.$
    \item Consider the cochain complex \(C:=C_*(\Delta^2)\),
   \begin{equation*}
        \underset{-2}{\kk\langle [012]\rangle} \xrightarrow{d_2} \underset{-1}{\kk\langle [01]\rangle\oplus\kk\langle [02]\rangle\oplus\kk\langle [12]\rangle}\xrightarrow{d_1} \underset{0}{\kk\langle [0]\rangle\oplus\kk\langle [1]\rangle\oplus\kk\langle [2]\rangle}.
    \end{equation*}
    \(\Inf(C_*(\Delta^2))\) is given by
    
    \[
    \begin{tikzcd}[ampersand replacement=\&]
	{\color{blue}\kk} \& {\color{red}\kk^{\oplus 3}} \& {\color{orange}\kk^{\oplus 3}} \\
	{\color{blue}\kk}  \& {{\color{blue}\kk} \oplus{\color{red}\kk^{\oplus 3}}} \& {\color{red}\kk^{\oplus 3}} \\
	\& {\color{blue}\kk}  \& {\color{blue}\kk}
	\arrow["{d_2}", from=1-1, to=1-2]
	\arrow["{d_1}", from=1-2, to=1-3]
	\arrow["\cdot2", color={blue}, from=2-1, to=1-1]
	\arrow["{\cdot(-1)}"', shift right, color={blue}, from=2-1, to=2-2]
	\arrow["{-d_2}", shift left, from=2-1, to=2-2]
	\arrow["{\cdot 1}", shift left=1, color={red}, from=2-2, to=1-2]
	\arrow["{d_2}"', shift left=-1, from=2-2, to=1-2]
	\arrow["{\cdot (-1)}"', shift right, color={red}, from=2-2, to=2-3]
	\arrow["{d_2}", shift left, from=2-2, to=2-3]
	\arrow["{d_1}", from=2-3, to=1-3]
	\arrow["\cdot1", shift left=1, color={blue}, from=3-2, to=2-2]
	\arrow["{-d_2}"', shift left=-1, from=3-2, to=2-2]
	\arrow["{\cdot(-2)}"', color={blue}, from=3-2, to=3-3]
	\arrow["{d_2}", from=3-3, to=2-3]
    \end{tikzcd}.
    \]
    Here, the blue bicomplex is \(\ele_{-2}\otimes C_{-2}\), the red bicomplex is \(\ele_{-1}\otimes C_{-1}\), and the orange bicomplex is \(\ele_{0}\otimes C_0=\kk\otimes C_0\cong C_0\), which sits in bidegree \((0,0)\).
\end{itemize}

\end{exas}
\begin{rema}
 Note that \(\Inf(C_*(\Delta^\bullet))\) is a simplicial bicomplex.
 Since $\Inf$ preserves weak equivalences, for each $k$,
 \(\Inf(C_*(\Delta^k))\)
 is pluripotentially acyclic:
 \[\A(\Inf(C_*(\Delta^k)))=0\quad\text{ and }\quad\BC(\Inf(C_*(\Delta^k)))=0.\]
 In particular, it is natural to define a \textit{pluripotential totalization functor}
\[\mathrm{Tot}:\Delta^{op}\Bico\lra \Bico\]
as the end 
\[\mathrm{Tot}(A^\bullet):=\int_\alpha A^\alpha\otimes \Inf(C_*(\Delta^\alpha))\]
of the functor 
$\Delta^{op}\times \Delta\to \Bico$ given by 
$(\alpha,\beta)\mapsto A^\alpha\otimes \Inf(C_*(\Delta^\beta)).$
\end{rema}

Observe that $\Inf$ preserves direct sums and that given an injection of cochain complexes $D \hookrightarrow C$, the canonical morphism
\begin{align*}
    \Inf(C) / \Inf(D) &\to \Inf(C/D) \\
    [(i,j)_n \otimes c] &\mapsto (i,j)_n \otimes [c]
\end{align*}
is an isomorphism. So the inflation functor $\Inf$ commutes with colimits. As $\Ch$ and $\Bico$ are locally presentable, it follows that $\Inf$ admits a right adjoint. In the following theorem, we show that the inflation functor and $\Bb$ form an adjunction. We present two proofs. The first is more elegant, while the second is more explicit and introduces constructions that will be used later.
\begin{theo}
\label{adj-theo}
    The inflation functor $\Inf$ is left adjoint to $\Sch$.
\end{theo}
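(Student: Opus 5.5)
Since $\Inf$ preserves colimits, its right adjoint $R$ exists. Evaluating the adjunction on the disk and sphere complexes determines $R$ degreewise: for the representing objects $S^n=\kk[-n]$ and $D^n$ (the cone $\kk\xrightarrow{\mathrm{id}}\kk$ in degrees $n-1,n$) we have
\[R(A)^n\cong\Hom_{\Ch}(D^{n+1},R(A))\cong\Hom_{\Bico}(\Inf(D^{n+1}),A),\]
with the differential induced by the map $D^{n+1}\to D^{n}$. The plan is therefore to compute $\Inf(D^{n+1})$ explicitly, show that it is a free bicomplex on a single generator, and identify that generator's position with the summand $\Bb^n(A)$.

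By the definition of $\Inf$, $\Inf(D^{n+1})$ is $\ele_n\oplus\ele_{n+1}$ with connecting maps $(i,j)_n\mapsto \pm(i+1,j)_{n+1}$ and $(i,j)_n\mapsto\pm(i,j+1)_{n+1}$.

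For $n\ge0$, every generator of $\ele_n$ sits on the antidiagonal $i+j=n$, with $i,j\ge0$. Those with $i,j>0$ are hit by $\del$ or $\delb$ from the second antidiagonal of $\ele_n$ ($i+j=n+1$, $i,j>0$). I claim that $\Inf(D^{n+1})$ decomposes as a direct sum of ``squares'' $\kk\to\kk^2\to\kk$, i.e.\ the free bicomplexes $\langle x,\del x,\delb x,\del\delb x\rangle$, one for each $(r,s)$ with $r,s\ge0$ and $r+s=n$. The generator $x$ sits in bidegree $(r,s)$, and $\del\delb x$ is a nonzero multiple of $(r+1,s+1)_{n+1}$. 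Granting this, a morphism $\Inf(D^{n+1})\to A$ is the free choice of one element of $A^{r,s}$ for each such $(r,s)$. This gives
\[\Hom(\Inf(D^{n+1}),A)\cong\bigoplus_{r+s=n,\;r,s\ge0}A^{r,s}=\Bb^n(A).\]
The case $n<0$ is symmetric. The generators now lie on $i+j=n+1$, $i,j\le0$, yielding $\bigoplus_{r+s=n-1,\,r,s<0}A^{r,s}=\Bb^n(A)$, which is consistent with the shift in the definition of $\Bb$.

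Checking the decomposition is a dimension count plus an invertibility check on the coefficients. This is where characteristic zero is used: the coefficients $j$ and $-i$ in \eqref{eledif-eq} must be invertible, or sums of them must be nonzero. In each degree the total dimension of $\Inf(D^{n+1})$ must match four times the number of squares, adjusted along the boundary. For $n=-1$, for instance, $\ele_{-1}\oplus\ele_0$ has total dimension $4$, which is one square with generator at $(-1,-1)$ and $\del\delb$ landing at $(0,0)$. This matches $\Bb^{-1}=A^{-1,-1}$ together with the $\del\delb$ differential.

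Next, the differential. The map $D^{n+1}\to D^n$ sending the degree-$n$ generator to the generator of $D^n$ induces $\Inf(D^{n+1})\to\Inf(D^n)$. I would track where each square generator is sent: to $\del x'+\delb x'$, to its projection, or to $\del\delb x'$, according to whether we are in the range $n\ge0$, $n\le-2$, or at the transition $n=-1$. This reproduces $d$, $\pr\circ d$ and $\del\delb$ respectively, so $R\cong\Bb$ naturally in $A$. Equivalently, and more directly, one can write the unit and counit, or verify by hand the natural bijection $\Hom_{\Bico}(\Inf C,A)\cong\Hom_{\Ch}(C,\Bb A)$. A chain map $f\colon C\to\Bb A$ extends uniquely to $\Inf C$, because on each summand $\ele_n\otimes C^n$ the map is determined by the values on the square generators. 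Compatibility with the connecting differentials is then exactly the condition $fd=d_{\Bb}f$.

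The main obstacle is the combinatorial verification that $\Inf(D^{n+1})$ is free on the stated generators, with nonvanishing coefficients and the correct signs, and in particular the boundary cases $n=-1$ and $n=0$, where the $\del\delb$ map and the projections appear. I would first do $|n|\le2$ by hand using the examples above, then handle general $n$ by an induction along the antidiagonal.
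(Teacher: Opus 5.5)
Your route is the paper's first proof: compute the right adjoint on disks, observe that $\Inf$ of a disk is free on its lowest antidiagonal, and read off $\Bb^n(A)$. The freeness part is fine. For $n\ge 0$ the $2\times 2$ blocks in degree $n+1$ have determinant $n+1$, and $\del\delb$ of a generator is $(n+1)$ times the top class. There is one slip: for $n<0$ the generators lie on $i+j=n-1$ with $i,j<0$. The antidiagonal $i+j=n+1$ is where their $\del\delb$ lands. Your formula for $\Bb^n(A)$ is nevertheless correct.

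The gap is the differential. With the natural generators $(p,q)_n\otimes c$, the new generators are \emph{not} sent to $\del x'+\delb x'$. From \eqref{infdif-eq}, for $n\ge 0$ and $r+s=n+1$,
\[
(r,s)_{n+1}\otimes dc=\frac{r\,\del\big((r-1,s)_n\otimes c\big)+s\,\delb\big((r,s-1)_n\otimes c\big)}{n+1}.
\]
For example, $(1,1)_2\otimes dc=\tfrac12\big(\del((0,1)_1\otimes c)+\delb((1,0)_1\otimes c)\big)$. For $n\le -2$ (now $r+s=n$, $r,s<0$) the same formula holds with $-n$ in the denominator.

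So under your identification the differential of $R(A)$ is the weighted operator
\[
y\mapsto\Big(\tfrac{r\,\del y^{r-1,s}+s\,\delb y^{r,s-1}}{n+1}\Big)_{r+s=n+1},
\]
not $d$ or $\pr\circ d$. Only $R^{-1}\to R^0$ (which is $\del\delb$) and $R^0\to R^1$ come out on the nose. The same objection applies to your ``direct'' bijection: compatibility with the connecting differentials is not the condition $fd=d_{\Bb}f$.

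The missing step is a rescaling $y^{p,q}\mapsto \mu^n_p\,y^{p,q}$ of the identification. It must satisfy
\[
\mu^{n+1}_r\tfrac{r}{n+1}=\mu^n_{r-1},\qquad \mu^{n+1}_r\tfrac{s}{n+1}=\mu^n_r,
\]
with $-n$ in place of $n+1$ for $n<0$. This system is overdetermined, since each new generator receives contributions from two old ones, so solvability is not automatic. It is solved by $\mu^n_p=\binom{n}{p}$ via Pascal's rule. For $n<0$ it is solved by the signed entries $(-1)^{n+1}\Le(-n,\cdot)$ of the Leibniz harmonic triangle. This is exactly the isomorphism $f\colon R\to\Bb$ in the paper, equivalently the unit $\eta_C$ of the second proof.

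Without this step you have only shown $R(A)\cong\Bb(A)$ as graded vector spaces. The real work lies here, not in the freeness check you flagged as the main obstacle.
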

\begin{proof}[Proof 1]
    Denote by $\langle \delta \rangle$ the \(\kk\)-linear category with objects $n \in \ZZ$ and morphisms
    \begin{equation*}
        \langle \delta \rangle(m,n) =  
        \begin{cases}
            \kk & \text{if } m - n = 0 \text{ or } 1, \\
            0 & \text{otherwise}.
        \end{cases}
    \end{equation*}
    Then, cochain complexes are linear functors
    on $\langle \delta \rangle$ with values in $\kk$-Mod, $\Ch = [\langle \delta \rangle, \kk\text{-Mod}]$. Similarly, denote by $\langle \del,\delb\rangle$ the category whose objects are pairs $(m,n) \in \ZZ^2$ and morphisms are
    \begin{equation*}
        \langle \del, \delb \rangle( (m,n), (m',n')) = \begin{cases}
            \del \kk & \text{if } m - m' = 1 \text{ and } n = n', \\
            \delb\kk & \text{if }n - n' = 1 \text{ and } m = m', \\
            \mathrm{id}\kk & \text{if } n = n' \text{ and } m = m', \\
            \del\delb\kk & \text{if } m - m'=1 \text{ and } n - n'=1, \\
            0 & \text{otherwise}.
        \end{cases}
    \end{equation*}
    Define also the composition law so as to have $\del \delb = - \delb \del$. Then, bicomplexes are linear functors on $\langle \del, \delb \rangle$ with values in $\kk$-Mod. 
    
    Denote by $\iota : \langle \delta \rangle \to \Ch$ the Yoneda embedding and define $F = \Inf \circ \iota$. Since both $\Ch$ and $\Bico$ are presheaf categories and $\Inf$ preserves colimits, the right adjoint $R$ to $\Inf$ is given by
    \begin{equation*}
        R(B)_n  = \Hom_{\Bico}(F(n),B), \text{\qquad for } B \text{ in } \Bico.
    \end{equation*}
    Note that $F(n) = \Inf(D^n)$, where the disk $D^n$ is the cochain complex
    \begin{equation*}
        \cdots \to 0 \to \underset{n}{\kk} \to \underset{n+1}{\kk} \to 0 \to \cdots.
    \end{equation*}
    As a bigraded vector space, $\Inf(D^n) = (\ele_n \otimes \kk) \oplus (\ele_{n+1} \otimes \kk)$ and is freely generated by the elements in the lowest degree. 
    This implies that a morphism in $\Hom_{\Bico}(F(n),B)$ is determined by the image of the copies of $\kk$ in $\Inf(D^n)$ of lowest bidegrees. That is, it is determined by a choice of elements in
    \begin{align*}
        &\bigoplus_{i \geq 0}^{n} B^{i,n-i} \text{\quad \hspace{1ex} for } n \geq 0 \\
        &\bigoplus_{i = n}^{-1} B^{i,n-i-1} \text{\quad for } n < 0.
    \end{align*}
    It follows that, as graded $\kk$-modules, $R(B) = \Sch(B)$.
    Denoting by $\delta$ the generating morphism of $\langle \delta \rangle(n+1,n)$, one has that $F(\delta)$ simply projects onto $(\ele_{n+1} \otimes \kk)$. The differential of $R(B)$ is given by
    \begin{align*}
        \Hom_{\Bico}(F(n),B) &\xrightarrow[]{d} \Hom_{\Bico}(F(n+1),B) \\
        f &\mapsto f \circ F(\delta).
    \end{align*}  
    For $n = -1$, we have
    \[ 
    F(0) = \begin{tikzcd}
        \kk \arrow[r, "id"] & \kk \\
        \kk \arrow[r, "id"] \arrow[u,"id"] & \kk \arrow[u,"-id"]
    \end{tikzcd}
    \mathrm{\qquad} F(-1) = \begin{tikzcd}
        \kk \arrow[r, "id"] & \kk \\
        \kk \arrow[r, "-id"] \arrow[u,"id"] & \kk \arrow[u,"id"]
    \end{tikzcd}
    \]
    where the bottom left corners of $F(0)$ and $F(-1)$ sit in bidegree $(0,0)$ and $(-1,-1)$, respectively. A morphism $g \in \Hom_{\Bico}(F(0),B)$ is determined by $g^{0,0}$ since it commutes with differentials. Likewise, $f \in \Hom_{\Bico}(F(-1),B)$ is determined by $f^{\mep1,\mep1}$, so we can make the identifications
    \[ R^0(B) = \Hom_{\Bico}(F(0),B) \cong B^{0,0}, \qquad  R^{-1}(B) = \Hom_{\Bico}(F(-1),B) \cong B^{-1,-1}.\]
    Note that $F(\delta) : F(0) \to F(-1)$ projects onto the copy of $\kk$ in bidegree $(0,0)$. The image of $f \in \Hom_{\Bico}(F(-1),B)$ by the differential $d : R^{-1}(B) \to R^{0}(B)$ is then given by $(f \circ F(\delta))^{0,0} = f^{0,0}$. But $f^{0,0} = \del \delb f^{\mep1,\mep1}$. Hence, $d(f^{\mep1,\mep1}) = \del \delb f^{\mep1,\mep1}$, which is the differential of the complex $\Sch$.

    For \(n<-1\) the differentials of \(R(B)\) are given by 
    \[
    d(b^{n,\mep1},\dots,b^{\mep1,n})=\sum_{p+q=n-1}\frac{(p+1)\del b^{p,q}+(q+1)\delb b^{p,q}}{-n}
    \]
    and for \(n\geq 0\) are given by 
    \[
    d(b^{0,n},\dots,b^{n,0})=\sum_{\substack{p+q=n}}\frac{(p+1)\del b^{p,q}+(q+1)\delb b^{p,q}}{n}.
    \]
    We define a natural isomorphism from \(f\colon R \to\Sch\) by 
    \[
    f^n(b^{n,\mep1},\dots,b^{\mep1,n})=(-1)^{n+1}(\Le(\mep n,1)b^{n,\mep 1},\Le(\mep n,2)b^{n+1,\mep 2},\dots,\Le(1,\mep n)b^{\mep n,n}),
    \]
    for \(n<0\), and by
    \[
    f^n(b^{0,n},\dots,b^{n,0})=(\binom{n}{0}b^{0,n},\binom{n}{1}b^{1,n},\dots,\binom{n}{n}b^{n,0}),
    \]
    for \(n\geq 0\). Here, \(\binom{k}{n}\) denote the binomial coefficients whereas \(\Le(k,\ell)\) denote the entries of the Leibniz harmonic triangle \cite{OEIS_A003506}, explicitly given by \[\Le(k,\ell)=\frac{1}{\ell\binom{k}{\ell}}.\qedhere\]
\end{proof}

\begin{proof}[Proof 2]
We prove the adjunction by explicitly constructing the unit and the counit. We begin by defining the unit of the adjunction
    \[
    \eta:\mathrm{Id}_{\Ch}\to \Sch\circ \Inf.
    \]
    Note that, for a cochain complex \(C\), 
    \[
    \Sch\Inf(C)^n =
    \begin{cases}
    \displaystyle
    \bigoplus_{\substack{p+q=n\\ p,q\ge 1}}
     (p,q)_{\,n-1}\otimes C^{n-1}
    \;\oplus\;
    \bigoplus_{\substack{p+q=n\\ p,q\ge 0}}
     (p,q)_{\,n}\otimes C^{n}
    & \text{if }n\geq 0,
    \\[1.2em]
    \displaystyle
    \bigoplus_{\substack{p+q=n-1\\ p,q\le -1}}
     (p,q)_{\,n}\otimes C^{n}
    \;\oplus\;
    \bigoplus_{\substack{p+q=n-1\\ p,q\le -1}}
     (p,q)_{\,n-1}\otimes C^{n-1}
    & \text{if }n < 0.
    \end{cases}
    \]
    Below we present a picture of \(\Sch\circ \Inf(C)\):

\[
    \fitText{\begin{tikzcd}[ampersand replacement=\&]
	\&\&\& {C^{2}} \& {C^2\oplus C^3} \\
	\&\&\& {C^{1}} \& {C^{1}\oplus C^{2}} \& {C^2\oplus C^3} \\
	\textcolor{rgb,255:red,143;green,143;blue,143}{{C^{-3}}} \& \textcolor{rgb,255:red,143;green,143;blue,143}{{C^{-2}}} \& \textcolor{rgb,255:red,143;green,143;blue,143}{{C^{-1}}} \& {C^{0}} \& {C^{1}} \& {C^{2}} \\
	{C^{-3}\oplus C^{-4}} \& {C^{-2}\oplus C^{-3}} \& {C^{-1}\oplus C^{-2}} \& \textcolor{rgb,255:red,143;green,143;blue,143}{{C^{-1}}} \& { ^0} \& { ^1} \\
	\& {C^{-3}\oplus C^{-4}} \& {C^{-2}\oplus C^{-3}} \& \textcolor{rgb,255:red,143;green,143;blue,143}{{C^{-2}}} \\
	\&\& {C^{-3}\oplus C^{-4}} \& \textcolor{rgb,255:red,143;green,143;blue,143}{{C^{-3}}} \& {^{-1}} \\
	\&\&\&\& { ^{-2}}
	\arrow[shift left, color={rgb,255:red,92;green,92;blue,214}, from=1-4, to=1-5]
	\arrow[shift right, from=1-4, to=1-5]
	\arrow[from=2-4, to=1-4]
	\arrow[shift left, color={rgb,255:red,92;green,92;blue,214}, from=2-4, to=2-5]
	\arrow[shift right, from=2-4, to=2-5]
	\arrow[shift right, color={rgb,255:red,92;green,92;blue,214}, from=2-5, to=1-5]
	\arrow[shift left, from=2-5, to=1-5]
	\arrow[shift left, color={rgb,255:red,92;green,92;blue,214}, from=2-5, to=2-6]
	\arrow[shift right, from=2-5, to=2-6]
	\arrow[color={rgb,255:red,143;green,143;blue,143}, from=3-1, to=3-2]
	\arrow[color={rgb,255:red,143;green,143;blue,143}, from=3-2, to=3-3]
	\arrow[color={rgb,255:red,143;green,143;blue,143}, from=3-3, to=3-4]
	\arrow[from=3-4, to=2-4]
	\arrow[from=3-4, to=3-5]
	\arrow[shift right, color={rgb,255:red,92;green,92;blue,214}, from=3-5, to=2-5]
	\arrow[shift left, from=3-5, to=2-5]
	\arrow[from=3-5, to=3-6]
	\arrow[shift right, color={rgb,255:red,92;green,92;blue,214}, from=3-6, to=2-6]
	\arrow[shift left, from=3-6, to=2-6]
	\arrow[shift right, color={rgb,255:red,143;green,143;blue,143}, from=4-1, to=3-1]
	\arrow[shift left, color={rgb,255:red,139;green,155;blue,187}, from=4-1, to=3-1]
	\arrow[shift right, color={rgb,255:red,92;green,92;blue,214}, from=4-1, to=4-2]
	\arrow[shift left, from=4-1, to=4-2]
	\arrow[shift left, color={rgb,255:red,139;green,155;blue,187}, from=4-2, to=3-2]
	\arrow[shift right, color={rgb,255:red,143;green,143;blue,143}, from=4-2, to=3-2]
	\arrow[shift right, color={rgb,255:red,92;green,92;blue,214}, from=4-2, to=4-3]
	\arrow[shift left, from=4-2, to=4-3]
    \arrow[shift left, color={rgb,255:red,139;green,155;blue,187}, from=4-3, to=3-3]
    \arrow[shift right, color={rgb,255:red,143;green,143;blue,143}, from=4-3, to=3-3]
	\arrow["{\quad\circlearrowleft}"', from=4-3, to=3-4]
    \arrow[shift right, color={rgb,255:red,139;green,155;blue,187}, from=4-3, to=4-4]
    \arrow[shift left, color={rgb,255:red,143;green,143;blue,143}, from=4-3, to=4-4]
	\arrow[color={rgb,255:red,143;green,143;blue,143}, from=4-4, to=3-4]
	\arrow[shift left, color={rgb,255:red,92;green,92;blue,214}, from=5-2, to=4-2]
	\arrow[shift right, from=5-2, to=4-2]
	\arrow[shift right, color={rgb,255:red,92;green,92;blue,214}, from=5-2, to=5-3]
	\arrow[shift left, from=5-2, to=5-3]
	\arrow[shift left, color={rgb,255:red,92;green,92;blue,214}, from=5-3, to=4-3]
	\arrow[shift right, from=5-3, to=4-3]
	\arrow[shift left, color={rgb,255:red,143;green,143;blue,143}, from=5-3, to=5-4]
	\arrow[shift right, color={rgb,255:red,139;green,155;blue,187}, from=5-3, to=5-4]
	\arrow[color={rgb,255:red,143;green,143;blue,143}, from=5-4, to=4-4]
	\arrow[shift left, color={rgb,255:red,92;green,92;blue,214}, from=6-3, to=5-3]
	\arrow[shift right, from=6-3, to=5-3]
	\arrow[shift left, color={rgb,255:red,143;green,143;blue,143}, from=6-3, to=6-4]
	\arrow[shift right, color={rgb,255:red,139;green,155;blue,187}, from=6-3, to=6-4]
	\arrow[color={rgb,255:red,143;green,143;blue,143}, from=6-4, to=5-4]
\end{tikzcd}}
\]
The black piece together with the gray one constitute \(\Inf(C)\), whereas the "totalization" of the black part constitutes \(\Sch\circ \Inf(C)\). The numbers on the right hand side denote the degree of $\Bb$.

    We define:
    \[
    \eta_C(c)= 
    \begin{cases}
        \displaystyle
        \sum_{\substack{p+q=n\\p+q\geq 0}}\binom{n}{p}(p,q)_{n}\otimes c & \text{if }n\geq 0,\\[1.2em]
        \displaystyle
        \sum_{\substack{p+q=n-1\\p,q\leq -1}}(-1)^n\Le(-n,-p)(p,q)_{n}\otimes c & \text{if }n< 0.
    \end{cases}
    \]
    Here, \(\binom{k}{n}\) denote the binomial coefficients whereas \(\Le(k,\ell)\) denote the entries of the Leibniz harmonic triangle. 
    We now define the counit of the adjunction
    \[
        \varepsilon:\Inf\circ \Sch\to \mathrm{Id}_{\Bico}.
    \]
    Note that, for a bicomplex \(A\),
    \[
    \Inf\circ \Sch(A)^{p,q}=
    \begin{cases}
        \displaystyle
        \bigoplus_{\substack{r+s=p+q\\r,s\geq 0}}(p,q)_{p+q}\otimes A^{r,s}\oplus\bigoplus_{\substack{r+s=p+q-1\\r,s\geq 0}}(p,q)_{p+q-1}\otimes A^{r,s} & \text{ if }p,q\geq 0,\\[1.2em]
        \displaystyle
        \bigoplus_{\substack{r+s=p+q-1\\r,s\leq -1}}(p,q)_{p+q}\otimes A^{r,s}\oplus\bigoplus_{\substack{r+s=p+q\\r,s\leq -1}}(p,q)_{p+q+1}\otimes A^{r,s} & \text{ if }p,q< 0.
    \end{cases}
    \]
    We present a picture of \(\Inf\circ \Sch(A)\):
\[\fitText{\begin{tikzcd}[ampersand replacement=\&]
	\&\& {\bigoplus_{\substack{r+s=2\\r,s\geq 0}} A^{r,s}} \& {\bigoplus_{\substack{r+s=2\\r,s\geq 0}} A^{r,s}\oplus \bigoplus_{\substack{r+s=3\\r,s\geq 0}} A^{r,s}} \\
	\&\& {A^{1,0}\oplus A^{0,1}} \& {A^{1,0}\oplus A^{0,1}\oplus \bigoplus_{\substack{r+s=2\\r,s\geq 0}} A^{r,s}} \& {\bigoplus_{\substack{r+s=2\\r,s\geq 0}} A^{r,s}\oplus \bigoplus_{\substack{r+s=3\\r,s\geq 0}} A^{r,s}} \\
	{A^{\mep2,\mep1}\oplus A^{\mep1,\mep2}} \& {A^{\mep1,\mep1}} \& {A^{0,0}} \& {A^{1,0}\oplus A^{0,1}} \& {\bigoplus_{\substack{r+s=2\\r,s\geq 0}} A^{r,s}} \\
	{\displaystyle A^{\mep2,\mep1}\oplus A^{\mep1,\mep2}\oplus \bigoplus_{\substack{r+s=-4\\r,s\leq \mep1}} A^{r,s}} \& {A^{\mep1,\mep1}\oplus A^{\mep2,\mep1}\oplus A^{\mep1,\mep2}} \& {A^{\mep1,\mep1}} \\
	\& {A^{\mep2,\mep1}\oplus A^{\mep1,\mep2}\oplus \bigoplus_{\substack{r+s=-4\\r,s\leq -1}} A^{r,s}} \& {A^{\mep2,\mep1}\oplus A^{\mep1,\mep2}}
	\arrow[shift left, color={rgb,255:red,92;green,92;blue,214}, from=1-3, to=1-4]
	\arrow[shift right, from=1-3, to=1-4]
	\arrow[from=2-3, to=1-3]
	\arrow[shift left, color={rgb,255:red,92;green,92;blue,214}, from=2-3, to=2-4]
	\arrow[shift right, from=2-3, to=2-4]
	\arrow[shift right, color={rgb,255:red,92;green,92;blue,214}, from=2-4, to=1-4]
	\arrow[shift left, from=2-4, to=1-4]
	\arrow[shift left, color={rgb,255:red,92;green,92;blue,214}, from=2-4, to=2-5]
	\arrow[shift right, from=2-4, to=2-5]
	\arrow[from=3-1, to=3-2]
	\arrow["{\del\delb}", from=3-2, to=3-3]
	\arrow[from=3-3, to=2-3]
	\arrow[from=3-3, to=3-4]
	\arrow[shift right, color={rgb,255:red,92;green,92;blue,214}, from=3-4, to=2-4]
	\arrow[shift left, from=3-4, to=2-4]
	\arrow[from=3-4, to=3-5]
	\arrow[shift right, color={rgb,255:red,92;green,92;blue,214}, from=3-5, to=2-5]
	\arrow[shift left, from=3-5, to=2-5]
	\arrow[shift left, color={rgb,255:red,92;green,92;blue,214}, from=4-1, to=3-1]
	\arrow[shift right, from=4-1, to=3-1]
	\arrow[shift right, color={rgb,255:red,92;green,92;blue,214}, from=4-1, to=4-2]
	\arrow[shift left, from=4-1, to=4-2]
	\arrow[shift left, color={rgb,255:red,92;green,92;blue,214}, from=4-2, to=3-2]
	\arrow[shift right, from=4-2, to=3-2]
	\arrow[shift left, from=4-2, to=4-3]
	\arrow[shift right, color={rgb,255:red,92;green,92;blue,214}, from=4-2, to=4-3]
	\arrow["{\del\delb}"', from=4-3, to=3-3]
	\arrow[shift left, color={rgb,255:red,92;green,92;blue,214}, from=5-2, to=4-2]
	\arrow[shift right, from=5-2, to=4-2]
	\arrow[shift left, from=5-2, to=5-3]
	\arrow[shift right, color={rgb,255:red,92;green,92;blue,214}, from=5-2, to=5-3]
	\arrow[from=5-3, to=4-3]
\end{tikzcd}}
\]
Again, the blue arrows denote the differentials of the bicomplexes \(\ele_n\otimes \Sch(A)^n\). The black arrows are given by the differential of $\Bb$.

We define:
\[
\begin{aligned}
   \varepsilon^{p,q}_A\big((p,q)_{p+q}\otimes(a^{p+q,0},\dots,a^{0,p+q})+(p,q)_{p+q-1}\otimes(a^{p+q-1,0},\dots,a^{0,p+q-1})\big)=\\
   =p\Le(p+q,p)a^{pq}+\Le(p+q,p)\del a^{p-1,q}-\Le(p+q,q)\delb a^{p,q-1},
\end{aligned}
\]
if \(p,q\geq 0\) and
\[
\begin{aligned}
   \varepsilon^{p,q}_A\big((p,q)_{p+q+1}\otimes(a^{p+q+1,-1},\dots,a^{-1,p+q+1})+(p,q)_{p+q}\otimes(a^{p+q,-1},\dots,a^{-1,p+q})\big)=\\
   =(-1)^{p+q+1}\big(\frac{1}{\Le(-p-q-1,-p)}a^{pq}+\binom{-p-q-1}{-q-1}\del a^{p-1,q}-\binom{-p-q-1}{-p-1}\delb a^{p,q-1}),
\end{aligned}
\]
if \(p,q< 0\).

It remains to verify that
\[
\Inf\xrightarrow[]{\Inf\eta}\Inf\circ \Sch\circ \Inf\xrightarrow[]{\varepsilon\Inf}\Inf
\]
and
\[
    \Sch\xrightarrow[]{\eta\Sch}\Sch\circ\Inf\circ \Sch\xrightarrow[]{\Sch\varepsilon}\Sch
\]
are the identity morphisms \(1_\Inf\) and \(1_\Sch\), respectively.   

Let \(A\) be a bicomplex, and let \(\ov a =(a^{n,0},a^{n-1,1},\dots,a^{0,n})\) be an element in \(\Sch(A)^n\) for \(n\geq 0\). The map
\[
\eta_{\Sch}\colon\Sch(A)\to\Sch\circ \Inf\circ \Sch(A)
\]
gives
\[
\ov a\mapsto \sum_{\substack{p+q=n\\p,q\geq 0}}\binom{n}{p}(p,q)_n\otimes \ov a.
\]
Applying 
\[
\Sch(\varepsilon)\colon \Sch\circ \Inf\circ \Sch(A)\to \Sch(A)
\]
gives
\[
\sum_{\substack{p+q=n\\p,q\geq 0}}\binom{n}{p}(p,q)_n\otimes (a^{0,n},\dots,a^{n,0})\mapsto \sum_{\substack{p+q=n\\p,q\geq 0}}\binom{n}{q}\frac{1}{\binom{n}{p}}a^{p,q}=\ov a.
\]

Now let \(\ov a = (a^{n,-1},a^{n+1,-2},\dots,a^{-1,n})\) be an element in \(\Sch(A)^n\) for \(n<0\). The map
\[
\eta_{\Sch}\colon\Sch(A)\to\Sch\circ \Inf\circ \Sch(A)
\]
gives
\[
\ov a\mapsto \sum_{\substack{p+q=n-1\\p,q\leq -1}}(-1)^n\Le(-n,-p)(p,q)_{n}\otimes\ov a.
\]
Applying 
\[
\Sch(\varepsilon)\colon \Sch\circ\Inf\circ\Sch(A)\to \Sch(A)
\]
gives
\[
\sum_{\substack{p+q=n-1\\p,q\leq -1}}(-1)^n\Le(-n,-p)(p,q)_{n}\otimes\ov a\mapsto \sum_{\substack{p+q=n-1\\p,q\leq -1}}(-1)^{n+p+q+1}\frac{\Le(-n,-p)}{\Le(-p-q-1,-p)}a^{p,q}=\ov a.
\]

It is a matter of verification to check that the unit is a map of chain complexes, that the counit is a map of bicomplexes, and that the composition \(\Inf\xrightarrow[]{F\eta}\Inf\circ \Sch\circ \Inf\xrightarrow[]{\varepsilon\Inf}\Inf\) is the identity map. 
\end{proof}

\begin{rema}
    The binomial coefficients and the entries of the Leibniz harmonic triangle are related by the formula 
    \[
    \Le(r,c)=\frac{1}{c\binom{r}{c}}.
    \]
    Moreover, both Pascal and Leibniz triangles are defined by the same recursive rule: each entry in the triangle is obtained as the sum of the two nearest entries in the row below.
    \[
    \begin{matrix}
        & & \vdots & & & & \vdots & &\\
        1 & & 4 & & 6 & & 4 & & 1\\
        & 1 & & 3 & & 3 & & 1 &\\
        & & 1 & & 2 & & 1 & &\\
        & & & 1 & & 1 & & &\\
        & & & & 1 & & & & \\
        & & & \frac{1}{2} & &  \frac{1}{2} & & &\\
        & & \frac{1}{3} & & \frac{1}{6} & & \frac{1}{3} & & \\
        & \frac{1}{4} & & \frac{1}{12} & &\frac{1}{12} & & \frac{1}{4} &\\
        \frac{1}{5} & & \frac{1}{20} & & \frac{1}{30} & & \frac{1}{20} & & \frac{1}{5}\\
        & & \vdots & & & & \vdots & &
    \end{matrix}
    \]
\end{rema}

\begin{rema}
Note that, for bicomplexes concentrated in the first quadrant, $\Bb$ is just the usual totalization of bicomplexes. Therefore, in this particular setting, the inflation functor is 
 left adjoint to totalization.
\end{rema}

Consider $\Ch$ endowed with the projective model structure and \(\Bico\) with the pluripotential model structure of \cite{pluri}.

\begin{theo}\label{theo:Qadj}
    There is a Quillen adjunction 
    \[
    \begin{tikzcd}[ampersand replacement=\&]
	{\Inf:\Ch} \& {\Bico:\Sch}\;.
	\arrow[shift left, from=1-1, to=1-2]
	\arrow[shift left, from=1-2, to=1-1]
    \end{tikzcd}
    \]
\end{theo}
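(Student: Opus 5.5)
By Theorem~\ref{adj-theo}, $\Inf$ is left adjoint to $\Sch$, so it only remains to check the Quillen condition. I will do this on the right adjoint: a functor that is right adjoint is right Quillen as soon as it preserves fibrations and trivial fibrations. Lemma~\ref{lemm:SchWeakEq} already gives that $\Sch$ preserves fibrations and weak equivalences. A trivial fibration is by definition a map that is both a fibration and a weak equivalence, so $\Sch$ sends it to a map that is again both, i.e.\ to a trivial fibration in the projective model structure on $\Ch$.

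Concretely, in $\Bico$ with the pluripotential model structure, fibrations are the bidegree-wise surjections, and trivial fibrations are the surjective pluripotential weak equivalences. For such a map $f\colon A\to B$, the map $\Sch(f)$ has two properties:
\begin{itemize}
\item it is degree-wise surjective, since in each degree $\Sch(A)^k$ is a finite direct sum of components $A^{r,s}$ and $\Sch(f)$ acts componentwise by $f$;
\item it is a quasi-isomorphism, since by the isomorphisms $H^{p,q}_{\mathcal{BC}}\cong H^{p+q}(\Bb_{p,q})$ and $H^{p,q}_{\mathcal{A}}\cong H^{p+q-1}(\Bb_{p,q})$ together with Corollary~1.25 of \cite{pluri}, as used in Lemma~\ref{lemm:SchWeakEq}, the functor $H^*\circ\Sch$ inverts pluripotential weak equivalences.
\end{itemize}
In the projective structure on $\Ch$ the fibrations are the degree-wise surjections and the trivial fibrations are the surjective quasi-isomorphisms. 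Hence $\Sch$ is right Quillen, and $(\Inf,\Sch)$ is a Quillen adjunction.

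There is essentially no obstacle beyond Lemma~\ref{lemm:SchWeakEq}, whose content rests on the fact that $H^*\circ\Sch$ kills pluripotentially acyclic bicomplexes \cite{StelzigSchRemarks}. As a sanity check, one could verify the dual statement directly: $\Inf$ should send the generating cofibrations $S^{n}\to D^{n-1}$ of $\Ch$ to injections, which is clear since $\Inf$ is built degree-wise from $\ele_n\otimes C^n$ and so preserves injections. The check that $\Inf$ preserves trivial cofibrations would instead follow from the fact that $\Inf$ sends quasi-isomorphisms to pluripotential weak equivalences. The right-adjoint argument above avoids this and is the one I would write.
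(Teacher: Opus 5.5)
Your proposal is correct and follows the same route as the paper: the adjunction of Theorem~\ref{adj-theo} together with Lemma~\ref{lemm:SchWeakEq} shows that $\Sch$ preserves fibrations and weak equivalences, hence trivial fibrations, so $\Sch$ is right Quillen. One small caveat on your extra detail: the Bott--Chern and Aeppli isomorphisms only identify $H^0$ and $H^{-1}$ of $\Sch(A)$, so the quasi-isomorphism claim in all degrees rests on the vanishing of $H^*\circ\Sch$ on pluripotentially acyclic bicomplexes from \cite{StelzigSchRemarks}, as you note at the end.
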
  
\begin{proof}
     By Lemma \ref{lemm:SchWeakEq}, the functor $\Sch$, which is the right adjoint, preserves weak equivalences and fibrations, it follows that $\Inf \dashv \Sch$ is a Quillen adjunction.
\end{proof}

\begin{prop}
    The inflation functor preserves weak equivalences. Moreover, it is conservative.
\end{prop}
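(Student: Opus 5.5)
Over a field every cochain complex is cofibrant in the projective model structure. Since $\Inf$ is left Quillen by Theorem~\ref{theo:Qadj}, Ken Brown's lemma applies: $\Inf$ sends weak equivalences between cofibrant objects to weak equivalences. Hence $\Inf$ preserves all quasi-isomorphisms. If one wants a more hands-on argument, here is an alternative. Write $C\simeq H(C)\oplus K$ with $K$ a direct sum of disks $D^n$; this splitting exists over a field. A quasi-isomorphism $f$ is then a homotopy equivalence, $\Inf$ is additive, and it suffices to check that $\Inf(D^n)$ is pluripotentially acyclic. That amounts to a direct computation of $\BC$ and $\A$ for the two-strand bicomplex $(\ele_n\oplus\ele_{n+1})$ with connecting maps. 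I would use Ken Brown as the main argument.

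For conservativity, let $f\colon C\to D$ with $\Inf(f)$ a pluripotential weak equivalence; I will show $H(f)$ is an isomorphism. The cleanest route is to recover $H^n(C)$ functorially from a Bott--Chern or Aeppli group of $\Inf(C)$. Look at the picture of $\Inf(C)$. On the axes we have $\Inf(C)^{0,n}=C^n$ for $n\ge 0$ (the copy $(0,n)_n$), and $\Inf(C)^{0,0}=C^0$. In bidegree $(0,0)$, the element $(0,0)_0\otimes c$ satisfies $\del=\pm(1,0)_1\otimes dc$ and $\delb=\pm(0,1)_1\otimes dc$, so $\ker\del\cap\ker\delb=Z^0(C)$. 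The image of $\del\delb$ from bidegree $(-1,-1)$ is spanned by $\del\delb((-1,-1)_{-1}\otimes c)$, which should equal $\pm(0,0)_0\otimes dc$ plus terms that vanish because $\ele$ has no other components there. Thus $\BC^{0,0}(\Inf C)\cong H^0(C)$ naturally.

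To reach every degree $n$, I would use a shift. Shifting $C$ to $C[k]$ and applying $\Inf$ gives something pluripotentially equivalent to $\Inf(C)\otimes\ele_{\pm1}^{\otimes k}$, by the remark on $\ele_n$. Tensoring with these preserves pluripotential weak equivalences, because the tensor structure makes $\Bico$ a monoidal model category and all objects are cofibrant. This route requires a natural comparison map, which is the delicate part. A simpler option avoids shifts: compute $\BC^{p,q}$ directly for $p,q$ large on the diagonal region of $\Inf(C)$. Alternatively, use the adjunction, noting that $H^n(\Sch\Inf C)$ recovers $H^n(C)$ via the unit. The unit $\eta_C\colon C\to\Sch\Inf(C)$ should be a quasi-isomorphism, and this can be checked degreewise from the explicit description of $\Sch\Inf(C)$ in Proof 2, since the complementary summands assemble into acyclic disks. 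Given that, $H(\Sch\Inf f)\cong H(f)$ by naturality of $\eta$. By Lemma~\ref{lemm:SchWeakEq}, $\Sch$ preserves weak equivalences, so $H(\Sch\Inf f)$ is an isomorphism, and therefore $H(f)$ is an isomorphism.

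The main obstacle is proving that $\eta_C$ is a quasi-isomorphism. Both sides commute with direct sums, and over a field $C$ splits as a sum of shifted copies of $\kk$ and disks $D^n$. So I would check this on $S^n=\kk[-n]$ and on $D^n$. For $D^n$, $\Sch\Inf(D^n)=\Hom(\Inf D^{\bullet},\Inf D^n)$ must be acyclic, which follows from the acyclicity of $\Inf(D^n)$ already used above. For $S^n$, $\Sch(\ele_n)$ is computed by hand: for $n\ge0$ it is the totalization of the zigzag $\ele_n$, which has cohomology $\kk$ in degree $n$ generated by $\eta(1)=\sum\binom{n}{p}(p,q)_n$, and there is a symmetric check for $n<0$ using the Leibniz coefficients. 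Verifying these small zigzag cohomologies is the concrete computational core.
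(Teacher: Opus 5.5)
Your first claim is handled exactly as in the paper. Over $\kk$ every complex is cofibrant in the projective model structure, so Ken Brown's lemma applied to the left Quillen functor $\Inf$ of Theorem~\ref{theo:Qadj} shows that $\Inf$ preserves all quasi-isomorphisms.

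For conservativity your route is genuinely different from the paper's, longer, and correct.

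The paper argues directly. By Theorem~1.21 of \cite{pluri}, a pluripotential weak equivalence is a pluripotential homotopy equivalence, so it induces isomorphisms on $\del$-cohomology. The row $(\Inf(C)^{*,0},\del)$ is isomorphic to $(C,d)$: only $(n,0)_n\otimes C^n$ lives there, and $\del(n,0)_n=0$. Hence $f=\Inf(f)|_{\Inf(C)^{*,0}}$ is a quasi-isomorphism.

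You instead prove that the unit $\eta_C\colon C\to\Sch\Inf(C)$ is a quasi-isomorphism for every $C$. By additivity and a sphere/disk splitting, it suffices to treat two cases.
\begin{itemize}
\item For $D^n$: $\Inf(D^n)\to 0$ is a weak equivalence by the first part, so $\Sch\Inf(D^n)$ is acyclic by Lemma~\ref{lemm:SchWeakEq}.
\item For $S^n$: $\Sch(\ele_n)$ is concentrated in degrees $n$ and $n+1$. The differential between them is bidiagonal with nonzero integer entries, hence surjective in characteristic zero. So the cohomology is one-dimensional, sits in the lowest degree $n$, and is spanned by the nonzero cycle $\eta(1)$.
\end{itemize}
You do not even need the paper's explicit formula for $\eta$, whose chain-map property is only asserted there: $\eta_{S^n}\neq 0$ already follows from the triangle identity, since $\ele_n\neq 0$. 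Two-out-of-three applied to $\Sch\Inf(f)\circ\eta_C=\eta_D\circ f$ then finishes the argument.

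What each route buys:
\begin{itemize}
\item The paper's route is short.
\item Yours proves something strictly stronger: the derived unit of $\Inf\dashv\Sch$ is an equivalence. So the left derived functor of $\Inf$ is fully faithful on homotopy categories, and conservativity is a formal consequence.
\end{itemize}

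Your remark about the axes of $\Inf(C)$ was one step from the paper's proof. The column $(\Inf(C)^{0,*},\delb)$ is also a copy of $(C,d)$, so invariance of $H_{\delb}$ (or $H_{\del}$ on the row) under pluripotential weak equivalences already settles conservativity. The detours through $\BC^{0,0}$ and shifts are unnecessary.
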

\begin{proof}
    The first part of the proposition is a corollary of Theorem \ref{theo:Qadj}. The second part can be proven as follows: let \(f\colon C\to D\) be a morphism of cochain complexes such that \(\Inf(f)\) is a pluripotential weak equivalence. By Theorem 1.21 of \cite{pluri}, it is a quasi-isomorphism with respect to the differential \(\del\) and \(\delb\).
    Note that the cochain complex \(\Inf(C)^{*,0}\) is isomorphic to the cochain complex \(C^*\), this is true for any cochain complex \(C\). It follows that \(f=\Inf(f)_{\mid\Inf(C)^{*,0}}\colon \Inf(C)^{*,0}\to \Inf(D)^{*,0}\) is a quasi-isomorphism of cochain complexes.
\end{proof}

\section{Monoidality of $\Bb$}
Denote by \( \Bico^{\doq} \) the subcategory of bicomplexes concentrated in the third quadrant, that is, \(B^{p,q}=0\) for \(p> 0\) or \(q>0\). Note that $\Inf(C)$ for a cochain complex $C$ in $\Ch^{\leq 0}$ lies in \( \Bico^{\doq} \).  It follows directly from Theorem \ref{adj-theo} that there is an adjunction
\[ \Inf: \Ch^{\leq 0} \leftrightharpoons \Bico^{\doq} : \Sch. \]

\begin{prop}
    \label{prop:colaxinf}
     The functor \[\Inf\colon \Ch^{\leq 0}\to \Bico^{\doq}\] is symmetric oplax monoidal with structure morphisms given by
     \begin{itemize}
         \item \(\phi=id: \Inf(\kk) \to \kk\), and
         \item for all $C$ and $D$ in $\Ch^{\leq 0}$,
         \begin{equation}
            \label{eq:colaxmorph}
             \phi_{C,D}:\Inf(C\otimes D)\to \Inf(C)\otimes \Inf(D),
         \end{equation}
         defined by
         \[
         \phi_{C,D}((i,j)_{k+\ell}\otimes c\otimes d)=\sum_{\substack{\alpha+\delta=i\\\beta+\gamma=j}}(-1)^{(\alpha+\beta)(\delta+\gamma+\ell)}(\alpha,\beta)_k\otimes c \otimes (\delta,\gamma)_\ell\otimes d,
         \]
         for \(c\in C^k\) and \(d\in D^\ell\).
     \end{itemize}
    Moreover, for all $C$ and $D$ in $\Ch^{\leq 0}$, the structure morphism \( \phi_{C,D} \) is a pluripotential weak equivalence.
\end{prop}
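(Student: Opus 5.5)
The plan has three parts: first, check that $\phi_{C,D}$ is a morphism of bicomplexes; second, verify the oplax coherence axioms; third, prove that $\phi_{C,D}$ is a pluripotential weak equivalence.

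\emph{Step 1: $\phi_{C,D}$ is a morphism of bicomplexes.} Restricted to $\Ch^{\leq 0}$, only the bicomplexes $\ele_n$ with $n\leq 0$ appear. Their bidegrees lie in the third quadrant, and the formula for $\phi_{C,D}$ sums over the ways of splitting a bidegree $(i,j)$ of $\ele_{k+\ell}$ into $(\alpha,\beta)$ for $\ele_k$ and $(\delta,\gamma)$ for $\ele_\ell$.

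I would first isolate the case of a single generator, that is, the underlying map of bigraded spaces
\[
\psi_{k,\ell}\colon \ele_{k+\ell}\to \ele_k\otimes\ele_\ell ,
\]
with the Koszul sign $(-1)^{(\alpha+\beta)(\delta+\gamma+\ell)}$. I would check that $\psi_{k,\ell}$ commutes with $\del$ and $\delb$ using \eqref{eledif-eq}. The point is that the coefficient $j$ in $\del(i,j)_n$ splits as $\beta+\gamma$, which matches the Leibniz rule on the tensor product. The same splitting $-i=-(\alpha+\delta)$ handles $\delb$.

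Next I would add the connecting terms coming from $d$ in \eqref{infdif-eq}. Because $d(c\otimes d')=dc\otimes d'\pm c\otimes dd'$, the connecting term of $\Inf(C\otimes D)$ must match the sum of the two connecting terms in $\Inf(C)\otimes\Inf(D)$. This comes down to comparing the sum over splittings of $(i+1,j)$ with the splittings of $(i,j)$ shifted in one factor. I would also check the sign bookkeeping $(-1)^{i+j+n}$ against the Koszul signs.

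If the splitting bounds $\alpha,\beta\leq 0$ and so on cause boundary mismatches (for instance the extra corner cells of $\ele_n$ for $n<0$), I would correct by restricting the sum to the admissible bidegrees and verify the boundary cells by hand. This is the messiest routine part.

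\emph{Step 2: coherence.} Counitality is immediate, since $\Inf(\kk)=\ele_0\otimes\kk=\kk$. Coassociativity reduces to the associativity of splitting a bidegree into three pieces, together with the sign identity, which is the usual Koszul sign rule. Symmetry follows by exchanging $(\alpha,\beta)$ with $(\delta,\gamma)$ and comparing the sign with the symmetry isomorphism of $\Bico$.

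\emph{Step 3: weak equivalence.} This is the main obstacle. The strategy is to reduce to spheres via filtrations.

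First, both sides are compatible with the brutal (stupid) filtrations of $C$ and $D$. The inflation functor sends short exact sequences of complexes to short exact sequences of bicomplexes, since it is degreewise a direct sum of copies of $\ele_n$. Pluripotential weak equivalences satisfy a five-lemma type property for short exact sequences, because $H_{\mathcal{BC}}$ and $H_{\mathcal{A}}$ can be read as cohomology of $\Bb_{p,q}$ (by Lemma~\ref{lemm:SchWeakEq} and the $\Bb_{p,q}$ description), and $\Bb_{p,q}$ is exact.

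Second, $\Inf$ preserves filtered colimits, and so does $H^*\circ\Bb_{p,q}$. We may therefore assume $C$ and $D$ are bounded, and then by induction that $C=\kk[-k]$ and $D=\kk[-\ell]$ are spheres. Here I am using that the tensor product is exact over a field and that the filtration of $C\otimes D$ is the tensor product of the filtrations.

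Third, in the sphere case the map becomes
\[
\psi_{k,\ell}\colon\ele_{k+\ell}\to\ele_k\otimes\ele_\ell ,\qquad k,\ell\leq 0 .
\]
For this I would use the Remark that $\ele_{-1}^{\otimes n}\simeq\ele_{-n}$, which gives an abstract equivalence. To see that the specific map $\psi_{k,\ell}$ is one, I would compute directly. The bicomplex $\ele_n$ is a zigzag whose Bott--Chern and Aeppli cohomology is one-dimensional, concentrated at its ends. It then suffices to check that $\psi_{k,\ell}$ sends a generator at the extremal corner, for example $(k,-1)_k$-type corners, to a nonzero multiple of a class representative in $\ele_k\otimes\ele_\ell$. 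Alternatively one could use the decomposition of bicomplexes into zigzags and squares to identify the cohomology of the target.
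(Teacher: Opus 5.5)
\paragraph{Verdict.} Your Steps 1 and 2 agree with the paper. There too, $\phi_{C,D}$ is assembled from the maps $\iota^{k,\ell}\colon\ele_{k+\ell}\to\ele_k\otimes\ele_\ell$, and coassociativity and symmetry are checked by sign bookkeeping. Your reduction in Step 3 is also sound: brutal filtrations, the five lemma applied to the long exact sequences of $H^*\Bb_{p,q}$, and filtered colimits. It is a legitimate alternative to the paper's shortcut, which splits $C$ and $D$ non-naturally into points and lines and disposes of the lines because they are contractible. Both routes arrive at the same crux, namely that $\iota^{k,\ell}$ is a pluripotential weak equivalence. That is where your argument breaks.

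\paragraph{The gap.} You claim that $H_{\mathcal{BC}}(\ele_n)$ and $H_{\mathcal{A}}(\ele_n)$ are one-dimensional and concentrated at the ends. This is false.
\begin{itemize}
\item For $n<0$, $\ele_n$ lives only in total degrees $n-1$ and $n$, so $\del\delb=0$ on it.
\item Every vertex $(i,j)_n$ with $i+j=n$ is both $\del$- and $\delb$-closed.
\item Every vertex with $i+j=n-1$ has both $\del$ and $\delb$ nonzero, and together these hit all upper vertices.
\item Hence $H_{\mathcal{BC}}(\ele_n)$ has dimension $|n|+1$, one class at every upper vertex, and $H_{\mathcal{A}}(\ele_n)$ has dimension $|n|$, one class at every lower vertex.
\end{itemize}
What is one-dimensional and sits at the ends is $H_\del$, $H_\delb$ (and the total cohomology).

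So checking an extremal corner says nothing about the remaining classes. Even at a single vertex, ``maps to a nonzero class'' requires knowing $\mathrm{im}\,\del\delb$, respectively $\mathrm{im}\,\del+\mathrm{im}\,\delb$, in $\ele_k\otimes\ele_\ell$ in that bidegree, and you do not determine these.

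\paragraph{The missing ingredient.} The paper observes that $\ele_k\otimes\ele_\ell\cong\ele_{k+\ell}\oplus(\text{squares})$, with $\iota^{k,\ell}$ exactly the split inclusion of the zigzag summand. For $k=\ell=-1$, the complementary square is generated by $(-1,-1)_{-1}\otimes(-1,-1)_{-1}$. Since squares are pluripotentially contractible, $\iota^{k,\ell}$ is a weak equivalence.

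Your closing ``alternatively'' points in this direction, but a decomposition of the target alone is not enough. You need that $\operatorname{coker}\iota^{k,\ell}$ is a sum of squares, i.e.\ pluripotentially acyclic. Since $\iota^{k,\ell}$ is injective, the long exact sequence of $H^*\Bb_{p,q}$ that you already use then finishes the proof.
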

\begin{proof}
    Let $C,D\in \Ch^{\leq 0}$, first note that
    \[
    \Inf(C\otimes D)=\bigoplus_{n\in \ZZ}\bigoplus_{k+\ell=n}\ele_n\otimes C^k\otimes D^\ell
    \]
    and
    \[\Inf(C)\otimes \Inf(D)=\bigoplus_{n\in\ZZ}\bigoplus_{k+\ell=n}\ele_k\otimes C^k\otimes \ele_\ell\otimes D^\ell.\]
    
    For every $k,\ell\in\ZZ_{\leq 0}$ define a map of bicomplexes
    \[\iota^{k,\ell}:\ele_{k+\ell}\to\ele_k\otimes \ele_\ell\]
     given by
    \[\iota^{k,\ell}(i,j)_{k+\ell}=\sum_{\substack{\alpha+\delta=i\\\beta+\gamma=j}}(-1)^{(\alpha+\beta)(\delta+\gamma+\ell)}(\alpha,\beta)_k\otimes(\delta,\gamma)_\ell.\]

    Define then the following maps:
    \[\tau\circ(\iota^{k,\ell}\otimes id^{\otimes 2}): \ele_{k+\ell}\otimes C^k\otimes D^\ell\xrightarrow{\iota^{k,\ell}} \ele_k\otimes\ele_\ell \otimes C^k\otimes  D^\ell\xrightarrow[\cong]{\tau} \ele_k\otimes C^k\otimes \ele_\ell \otimes D^\ell, \]
    which yield a map of bicomplexes
    \[\phi_{C,D}:\Inf(C\otimes D)\to \Inf(C)\otimes \Inf(D).\]
    Note that there are no signs involved in $\tau$ since we consider $C^p$ a bicomplex sitting in degree $(0,0)$.
    The monoidal unit in $\Bico$ and in $\Ch$ is $\kk$. So we can define a morphism
    \[\phi=id: \Inf(\kk) \to \kk,\]
    and the unitality condition follows immediately. 

    It remains to check coassociativity, that is:
    \[(\phi_{C,D}\otimes id)\circ\phi_{C\otimes D,E}=(id\otimes \phi_{D,E})\circ\phi_{C,D\otimes E}:\Inf(C\otimes D\otimes E)\to \Inf(C)\otimes \Inf(D)\otimes \Inf(E).\]

    We need to show 
    \[
    (\iota^{r,s}\otimes id)\circ\iota^{r+s,t}=(id\otimes\iota^{s+t})\circ\iota^{r,s+t}
    \]
    for all \(r,s,t\in\ZZ_{\leq 0}\). On the one hand,
    \begin{align*}
        &(\iota^{r,s}\otimes id)\circ\iota^{r+s,t}(i,j)_{r+s+t}=\\
        &=\sum_{\substack{p_1+t_1=i\\ p_2+t_2=j}}(-1)^{d_{p}(d_{t}+t)}(\iota^{r,s}\otimes id)(p_1,p_2)_{r+s}\otimes(t_1,t_2)_t=\\
        &=\sum_{\substack{p_1+t_1=i\\ p_2+t_2=j}}\sum_{\substack{r_1+s_1=p_1\\ r_2+s_2=p_2}}(-1)^{d_{p}(d_{t}+t)+d_{r}(d_{s}+s)}(r_1,r_2)_r\otimes (s_1,s_2)_s\otimes(t_1,t_2)_t=\\
        &=\sum_{\substack{r_1+s_1+t_1=i\\ r_2+s_2+t_2=j}}(-1)^{(d_{r}+d_{s})(d_{t}+t)+d_{r}(d_{s}+s)}(r_1,r_2)_r\otimes(s_1,s_2)_s\otimes (t_1,t_2)_t.    
    \end{align*}
    Where \(d_{x}=x_1+x_2\). On the other hand,
        \begin{align*}
        &(id\otimes \iota^{s,t})\circ\iota^{r,s+t}(i,j)_{r+s+t}=\\
        &=\sum_{\substack{r_1+q_1=i\\ r_2+q_2=j}}(-1)^{d_{r}(d_{q}+q)}(id\otimes \iota^{s,t})(r_1,r_2)_r\otimes(q_1,q_2)_q=\\
        &=\sum_{\substack{r_1+q_1=i\\ r_2+q_2=j}}\sum_{\substack{s_1+t_1=q_1\\ s_2+t_2=q_2}}(-1)^{d_{r}(d_{q}+q)+d_{s}(d_{t}+t)}(r_1,r_2)_r\otimes(s_1,s_2)_s\otimes(t_1,t_2)_t=\\
        &=\sum_{\substack{r_1+s_1+t_1=i\\ r_2+s_2+t_2=j}}(-1)^{d_{r}(d_{s}+s+d_{t}+t)+d_{s}(d_{t}+t)}(r_1,r_2)_r\otimes(s_1,s_2)_s\otimes(t_1,t_2)_t.
    \end{align*}
    To prove it is symmetric, we want to check that the following diagram commutes 
    \[\begin{tikzcd}[ampersand replacement=\&]
	{\Inf(C\otimes D)} \& {\Inf(D\otimes C)} \\
	{\Inf(C)\otimes\Inf(D)} \& {\Inf(D)\otimes\Inf(C)\;.}
	\arrow["\cong"',"\tau_1", from=1-1, to=1-2]
	\arrow["{\phi_{C,D}}"', from=1-1, to=2-1]
	\arrow["{\phi_{D,C}}", from=1-2, to=2-2]
	\arrow["\cong"',"\tau_2", from=2-1, to=2-2]
    \end{tikzcd}\]
    Let \((i,j)_{k+\ell}\otimes c\otimes d\in \Inf(C\otimes D)\) where \(\mid c\mid=k\) and \(\mid d\mid=\ell\). On the one hand, we have
    \[
    \phi_{D,C} \circ \tau_1 = \sum_{\substack{p_1 + q_1 = i \\ p_2 + q_2 = j}} (-1)^{k\ell + d_q(d_p + k)} (q_1, q_2)_\ell \otimes d \otimes (p_1, p_2)_k \otimes c.
    \]
    On the other
    \[
    \tau_2\phi_{C,D}=\sum_{\substack{p_1+p_2=p\\q_1+q-2=q}}(-1)^{d_p(d_q+\ell)+d_pd_q}(q_1,q_2)_\ell\otimes d\otimes(p_1,p_2)_k\otimes c.
    \]
    If \(i + j = -k - \ell\), then \(d_p = -k\) and \(d_q = -\ell\), and clearly:
    \[
    \phi_{D,C} \circ \tau_1 = \tau_2 \circ \phi_{C,D}.
    \]
    If \(i + j = -k - \ell - 1\), then either \(d_p = -k\) and \(d_q = -\ell - 1\), or \(d_p = -k - 1\) and \(d_q = -\ell\). In both cases, the desired equality holds.

    To check that $\phi_{C,D}$ is a pluripotential weak equivalence, note that every complex is non-naturally isomorphic to a sum of complexes of the form:
    \[ \text{(points)} \quad \cdots \to 0 \to \kk \to 0 \to \cdots \qquad \text{and} \qquad \text{(lines)} \quad \cdots \to 0 \to \kk \xrightarrow[]{id} \kk \to 0 \to \cdots. \]
    Let us denote a point by $\bullet$ and a line by $\bullet \to \bullet$. By naturality of $\phi$ and the fact that $\Inf$ and $\otimes$ commute with direct sums, we only need to check that $\phi_{\bullet,\bullet}, \phi_{\bullet, \bullet \to \bullet}, \phi_{\bullet \to \bullet, \bullet}$ and $\phi_{\bullet \to \bullet,\bullet \to \bullet}$ are weak equivalences. Since $\bullet \to \bullet$ is a contractible cochain complex and $\Inf$ and $\otimes$ preserve weak equivalences, it follows that $\phi_{\bullet, \bullet \to \bullet}, \phi_{\bullet \to \bullet, \bullet}$ and $\phi_{\bullet \to \bullet,\bullet \to \bullet}$ are homotopically trivial. Denoting by $\bullet^k$ the point concentrated in degree $k\leq 0$, observe that $\Inf(\bullet^k) = \ele_{k}$ and
    \[\phi_{\bullet^k,\bullet^\ell} = \iota^{k,\ell} : \ele_{k+\ell} \to \ele_k \otimes \ele_\ell.\]
    The bicomplex $\ele_k \otimes \ele_\ell$ is isomorphic to the direct sum of $\ele_{k+\ell}$ with squares below each step. A square is a bicomplex as depicted below, where the differentials (the arrows) are isomorphisms,
    \[ \text{(square)} \quad \begin{tikzcd}
        \kk \arrow[r] & \kk \\
        \kk \arrow[r] \arrow[u] & \kk \arrow[u]
    \end{tikzcd}.
    \]
    For example for \(k=\ell=-1\) we have
\[\ele_{-1} \otimes \ele_{-1} \cong\begin{tikzcd}[ampersand replacement=\&]
	\kk \\
	\kk \& \kk^{\mep1,\mep1} \&\& \oplus \& \kk \& \kk^{\mep1,\mep1} \\
	\& \kk \& \kk \&\& \kk \& \kk
	\arrow[from=2-1, to=1-1]
	\arrow[from=2-1, to=2-2]
	\arrow[from=2-5, to=2-6]
	\arrow[from=3-2, to=2-2]
	\arrow[from=3-2, to=3-3]
	\arrow[from=3-5, to=2-5]
	\arrow[from=3-5, to=3-6]
	\arrow[from=3-6, to=2-6]
\end{tikzcd}\]
    where the superscripts denote the bidegrees. Moreover, the map $\iota^{k,\ell}$ is a split inclusion of $\ele_{k+\ell}$ into \(\ele_k\otimes\ele_\ell\). Since squares are homotopically trivial, the maps $\iota^{k,\ell}$ are weak equivalences. (cf. Lemma $1.9$ of \cite{pluri}).
\end{proof}

 We next describe the lax monoidal structure of the functor $\Sch$ when restricted to third-quadrant bicomplexes. 
\begin{theo}
    The functor 
    \[
    \Sch\colon\Bico^{\doq}\to \Ch^{\leq 0}
    \]
    is lax symmetric monoidal with structure morphisms given by:
    \begin{itemize}[leftmargin=*]
        \item \(\tilde{\phi}=id: \kk \to \Sch(\kk)\), and
        \item for all $A$ and $B$ in $\Bico^{\doq}$,
        \[
        \tilde{\phi}_{A,B}:\Sch(A)\otimes \Sch(B)\to\Sch(A\otimes B)
        \]
        defined on elements \(\ov a = (a^{-m,-1},a^{-m+1,-2},\dots,a^{-1,-m})\) in \(\Sch(A)^{-m}\) and \\\(\ov b = (b^{-n,-1},\dots,b^{-1,-n})\) in \(\Sch(B)^{-n},\) for \(m,n>1\), by
        \[
        \begin{aligned}
            &\tilde\phi_{A,B}^{-m-n}\big(\ov a\otimes\ov b\big)=\\
            &\sum_{\substack{\alpha+\beta=m\\ \gamma+\delta=n+1\\\alpha,\beta\ge 0,\gamma,\delta\geq 1}}
            (-1)^{n+1}\frac{\Le(m+n,\alpha+\gamma)}{\Le(n,\gamma)}
            \bigg(\binom{m-1}{\beta-1}\del a^{-\alpha-1,-\beta}-\binom{m-1}{\alpha-1}\delb a^{-\alpha,-\beta-1}\bigg)\otimes b^{-\gamma,-\delta}\\
            &\qquad-\sum_{\substack{\alpha+\beta=m+1\\ \gamma+\delta=n\\\alpha,\beta\ge 1,\gamma,\delta\geq 0}}
            \frac{\Le(m+n,\alpha+\gamma)}{\Le(m,\alpha)}
            a^{-\alpha,-\beta}\otimes
            \bigg(\binom{n-1}{\delta-1}\del b^{-\gamma-1,-\delta}-\binom{n-1}{\gamma-1}\delb b^{-\gamma,-\delta-1}\bigg.
        \end{aligned}
        \]
        If \(\ov a\in \Sch(A)^{0}\) or \(\ov b\in \Sch(B)^{0}\) we have
        \(\tilde\phi_{A,B}^{-m-n}\big(\ov a\otimes\ov b\big)=\ov a\otimes \ov b\).
    \end{itemize}
    Here, \(\Le(k,\ell)\) denotes the entries of the Leibniz harmonic triangle \cite{OEIS_A003506}, explicitly given by \(\Le(k,\ell)=\frac{1}{\ell\binom{k}{\ell}}\).
\end{theo}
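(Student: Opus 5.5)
The plan is to obtain the lax structure formally from Proposition~\ref{prop:colaxinf}, rather than checking the explicit formula directly. The key input is the standard doctrinal adjunction: if a left adjoint $F$ is oplax symmetric monoidal, then its right adjoint $G$ acquires a lax symmetric monoidal structure, and conversely. So I would first restrict the adjunction of Theorem~\ref{adj-theo} to $\Inf\colon \Ch^{\leq 0}\leftrightharpoons \Bico^{\doq}\colon\Sch$, as noted at the start of this section. Then I would define $\tilde\phi_{A,B}$ as the mate of $\phi$:
\[
\Sch A\otimes \Sch B\xrightarrow{\eta}\Sch\Inf(\Sch A\otimes\Sch B)\xrightarrow{\Sch\phi}\Sch(\Inf\Sch A\otimes\Inf\Sch B)\xrightarrow{\Sch(\varepsilon\otimes\varepsilon)}\Sch(A\otimes B).
\]
The unit map $\tilde\phi$ is defined in the same way from $\phi=id$, and it is clearly the identity $\kk\to\Sch(\kk)$.

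Associativity, unitality and symmetry of $\tilde\phi$ then follow from the corresponding coherence properties of $\phi$, all proved in Proposition~\ref{prop:colaxinf}. The argument is a standard string-diagram or mate argument using the triangle identities, which Proof~2 of Theorem~\ref{adj-theo} verifies. I would also check that $\Inf(\Sch A\otimes \Sch B)$ stays in the third quadrant. This holds because $\Sch A\otimes\Sch B\in\Ch^{\leq0}$, so $\phi$ is defined there.

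The remaining step, which I expect to be the main obstacle, is to identify this abstract composite with the explicit formula in the statement. I would evaluate it on $\ov a\otimes\ov b\in\Sch(A)^{-m}\otimes\Sch(B)^{-n}$ in three stages.
\begin{enumerate}
\item The unit $\eta$ produces $\sum(-1)^{m+n}\Le(m+n,-p)(p,q)_{-m-n}\otimes\ov a\otimes\ov b$, summed over $p+q=-m-n-1$.
\item Applying $\phi$ splits each $(p,q)_{-m-n}$ via $\iota^{-m,-n}$ into $(-\alpha,\cdot)_{-m}\otimes(-\gamma,\cdot)_{-n}$, with a sign.
\item The counit $\varepsilon$ is applied on each factor, using the $p,q<0$ formula from Proof~2.
\end{enumerate}

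Only components landing in the correct bidegrees of $\Sch(A\otimes B)^{-m-n}$ survive, namely total degree $-m-n-1$ with both indices $\leq -1$. This forces exactly one of the two tensor factors to lie in the ``lower step'' $(p,q)$ with $p+q=k-1$ of its $\ele$. On that factor $\varepsilon$ contributes only the $\del$ and $\delb$ terms, whose coefficients are the binomials $\binom{m-1}{\beta-1}$ and $\binom{m-1}{\alpha-1}$. The other factor contributes the coefficient $1/\Le$, which gives the denominators $\Le(n,\gamma)$ and $\Le(m,\alpha)$. The prefactor $\Le(m+n,\alpha+\gamma)$ comes from $\eta$. This accounts for the two sums in the statement.

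What remains is bookkeeping of signs: those from $\iota$, from the $(-1)^{p+q+1}$ in $\varepsilon$, and from $\eta$. I would track them by collecting all exponents mod~$2$ in terms of $m,n,\alpha,\gamma$. The degree-$0$ case is immediate, since $\ele_0=\kk$ and $\eta,\varepsilon$ are the identity there.
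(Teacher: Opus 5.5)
Your proposal takes the same route as the paper's proof: the paper also defines $\tilde\phi$ as the mate $\Sch(\varepsilon_A\otimes\varepsilon_B)\circ\Sch(\phi_{\Sch A,\Sch B})\circ\eta_{\Sch A\otimes\Sch B}$ of the symmetric oplax structure of Proposition~\ref{prop:colaxinf}, cites Schwede--Shipley's doctrinal-adjunction argument for the lax symmetric coherence, and only asserts that the unit and counit of Proof~2 of Theorem~\ref{adj-theo} give the displayed formula, so your three-stage evaluation goes further than the paper does. Two cautions for that evaluation: you have the factors' roles reversed, since by the $p,q<0$ counit formula the factor in the lower step $(p,q)_{p+q+1}$ contributes its own component times $1/\Le$ while the factor in the upper step $(p,q)_{p+q}$ contributes the $\del,\delb$ terms with the binomials (in the first sum $A$ sits in the upper step, $\alpha+\beta=m$, and $B$ in the lower step, $\gamma+\delta=n+1$); and the sign bookkeeping cannot reproduce the printed signs, because symmetry of $\tilde\phi$ (compare $\Sch(\tau)$ applied to the first sum of $\tilde\phi_{A,B}$ with the second sum of $\tilde\phi_{B,A}$) forces the first-sum sign to be $(-1)^m$ times the second-sum sign, which the printed pair $(-1)^{n+1}$, $-1$ violates whenever $m+n$ is odd.
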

\begin{proof}
Consider a colax monoidal functor
\[
F \colon (\Cc,\otimes,I_\Cc) \to (\Dd,\otimes,I_\Dd)
\]
between monoidal categories, with monoidal structure maps \(\nu:F(I_\Cc)\to I_\Dd\) and
\(\phi_{X,Y}:F(X\otimes Y)\to F(X)\otimes F(Y)\).
If \(F\) has a right adjoint
\[
G \colon \mathcal D \to \mathcal C,
\]
we can consider the adjoint
\[
\tilde{\nu} \colon I_{\mathcal D} \to G(I_{\mathcal C})
\]
of \(\nu\), and the natural map
\begin{equation*}
\tilde{\phi} \colon G(A)\otimes G(B) \to G(A \otimes B)
\end{equation*}
adjoint to the composite
\[
F\left(G(A)\otimes G(B)\right)
\xrightarrow{\phi_{G(A),G(B)}\ }
FG(A) \otimes FG(B)
\xrightarrow{\varepsilon_{A}\otimes\varepsilon_{B}}
A \otimes B.
\]

The map \(\tilde{\phi}\) can equivalently be defined as the composition
\begin{equation*}
\fitMath{
G(A)\otimes G(B)
\xrightarrow{\eta_{G(A) \otimes G(B)}}
GF\left(G(A) \otimes G(B)\right)
\xrightarrow{G(\phi_{G(A),G(B)}) }
G\left(FG(A) \otimes FG(B)\right)
\xrightarrow{G(\varepsilon_A\otimes\varepsilon_B) }
G\left(A \otimes B\right).}
\end{equation*}
Here \(\eta\) and \(\varepsilon\) denote, respectively, the unit and counit of the
adjunction. The pair \((\tilde{\nu},\tilde{\phi})\) defines a lax monoidal structure on the
right adjoint \(G\).
The dual construction is described in \cite{SchBro03}. If the left adjoint is symmetric colax monoidal, then the right adjoint is symmetric lax monoidal.

The unit and the counit of the adjunction
    \[
    \begin{tikzcd}[ampersand replacement=\&]
	{\Inf:\Ch^{\leq 0}} \& {\Bico^{\doq}:\Sch}
	\arrow[shift left, from=1-1, to=1-2]
	\arrow[shift left, from=1-2, to=1-1]
    \end{tikzcd}
    \]
given in the second proof of Theorem \ref{adj-theo} and the colax structure of \(\Inf\) given in Proposition \ref{prop:colaxinf} yield the explicit description of the lax monoidal structure on
\(\Sch\) when restricted to third-quadrant bicomplexes.
\end{proof}

\section{The infinity category of \texorpdfstring{\(\Bico\)}{Bico}}\label{sec:InftyCat}
In this section we describe three 
main strategies to build the $\infty$-category of bicomplexes with pluripotential weak equivalences:
through Dwyer--Kan localization, 
through a dg-enrichment, and through a simplicial enrichment. We show these are all equivalent.

\subsection*{A dg-enrichment}
Consider the canonical truncation functor for bicomplexes 
\[
\tau_{\doq}:\Bico\to\Bico^{\doq}
\]
as follows
\begin{equation}
\label{bitrunc-eq}
    \tau_{\doq}(B)=
    \begin{cases}
        B^{pq} & \text{ if }p,q<0,\\
        \ker(\del)^{pq} & \text{ if }p=0,q<0,\\
        \ker(\delb)^{pq} & \text{ if }p<0,q=0,\\
        \ker(\del)^{00} \cap \ker(\delb)^{00} & \text{ if }p=q=0\text{ and }\\
        0 & \text{ otherwise.}
    \end{cases}
\end{equation}
There is a Quillen adjunction
\[
i:\Bico^{\doq}\rightleftharpoons\Bico:\tau_{\doq},
\]
where \(i\) denotes the inclusion functor. From this adjunction we obtain that \(\tau_{\doq}\) is a lax monoidal functor, since \(i\) is oplax. Consider the following composite of lax monoidal functors
\[\Ee:\Bico\xra{\tau_{\doq}}\Bico^{\doq}\xra{\Sch}\Ch^{\leq 0}.\]

The category \(\Bico\) is self enriched, so we can apply the above composite of functors to Hom-objects. We denote by \(\Bico^\mathrm{Ch}\)  the
\textit{dg-category of bicomplexes} that has as objects bicomplexes, and for every $A$ and $B$ in $\Bico$, the Hom-object $[A,B]$ in $\Ch$ is given by
    \[ [A,B] = \Ee( \underline{\Hom}_{\Bico}(A,B)).\]

Applying the dg-nerve construction (see section $1.3.1$ of \cite{HA}), we get an infinity category $N_{dg}(\Bico^{\mathrm{Ch}})$.

\subsection*{A simplicial enrichment}
Now consider the Dold-Kan correspondence $N \dashv \Gamma$ and the forgetful functor
\[ 
\Ch^{\leq 0} \xrightarrow[]{\Gamma} s \mathrm{Mod}_\kk \xrightarrow[]{U} \mathrm{sSet}.
\]
In the same way, we get a \emph{simplicial category of bicomplexes} \(\Bico^\mathrm{sSet}\), whose objects are bicomplexes and for every $A$ and $B$ in $\Bico$, the Hom-object $[A,B]$ in \(\mathrm{sSet}\) is given by 
\[ 
U \Gamma \Ee( \underline{\Hom}_{\Bico}(A,B)).
\]

    The $n$-simplices of the Hom-objects \([A,B]\) in \(\Bico^{\mathrm{sSet}}\) are given by
    \begin{align*}
        U \Gamma \Ee( \underline{\Hom}_{\Bico}(A,B))_n &= \Hom_{\mathrm{sSet}}(\Delta^n, U \Gamma \Ee( \underline{\Hom}_{\Bico}(A,B))) \\
        &\cong \Hom_{\mathrm{sMod}_\kk}( \kk \Delta^n, \Gamma \Ee( \underline{\Hom}_{\Bico}(A,B))) \\
        &\cong \Hom_{\Ch^{\leq 0}}(N(\kk \Delta^n),  \Ee( \underline{\Hom}_{\Bico}(A,B))) \\
        &\cong \Hom_{\Bico}(\Inf(N(\kk \Delta^n)),\underline{\Hom}_{\Bico}(A,B)) \\
        &\cong \Hom_{\Bico}(\Inf(N(\kk \Delta^n))\otimes A, B).
    \end{align*}
    This implies that the functor \(B\mapsto [A,B]_n\) is represented by the element \(A\otimes \Inf(N(\kk\Delta^n))\). Moreover, since \(\Bico\) is a \(\Bico\)-enriched model category and all the functors involved in the simplicial enrichment are right Quillen, this enrichment satisfies the pullback-power axiom (M7 condition in Definition 9.1.6 in \cite{Hir03}) with respect to the pluripotential model structure. We obtain that \(\Bico^{\mathrm{sSet}}\) is a \emph{weak simplicial model category}, see \cite{Hi16}.

Denote by $N_{hc}(\Bico^{\mathrm{sSet}})$ the infinity category obtained by applying the homotopy coherent nerve.

\subsection*{Comparison with Dwyer--Kan localization}

Denote by $\Bico[W^{-1}]$ the $\infty$-category obtained by Dwyer-Kan localization with respect to pluripotential weak equivalences. 

\begin{theo}\label{bicoinf-theo}
    There is a chain of equivalences of \(\infty\)-categories 
    \[
    \Bico[W^{-1}] \simeq N_{hc}(\Bico^{\mathrm{sSet}}) \simeq N_{dg}(\Bico^{\mathrm{Ch}}).
    \]
\end{theo}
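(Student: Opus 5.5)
The plan is to prove the two equivalences separately, with the simplicial enrichment as the bridge.

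\textbf{First equivalence: $\Bico[W^{-1}]\simeq N_{hc}(\Bico^{\mathrm{sSet}})$.} I would use the general principle that, for a (weak) simplicial model category, the homotopy coherent nerve of the full simplicial subcategory of bifibrant objects models the Dwyer--Kan localization. In the pluripotential model structure every object is both cofibrant and fibrant, since cofibrations are the injections and fibrations the surjections. So no restriction to bifibrant objects is needed. It then suffices to check that the mapping spaces $U\Gamma\Ee(\underline{\Hom}_{\Bico}(A,B))$ compute the derived mapping spaces, i.e. the hammock or Dwyer--Kan mapping spaces of $(\Bico,W)$. For this I would use the representability computation above: $[A,B]_n\cong\Hom_{\Bico}(A\otimes\Inf(N(\kk\Delta^n)),B)$. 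The simplicial object $A\otimes\Inf(N(\kk\Delta^\bullet))$ should be a cosimplicial resolution (frame) of $A$, and two facts give this:
\begin{itemize}
\item $\Inf$ preserves weak equivalences, so $\Inf(N(\kk\Delta^n))\to\Inf(\kk)=\kk$ is a pluripotential weak equivalence, and $A\otimes-$ preserves it by the monoidal model structure;
\item the latching maps are cofibrations, because $\Inf$ is left Quillen and $N(\kk\partial\Delta^n)\to N(\kk\Delta^n)$ is a projective cofibration.
\end{itemize}
Since $B$ is fibrant, $\Hom(A\otimes\Inf N\kk\Delta^\bullet,B)$ then computes the derived mapping space (Hirschhorn, Ch.~17). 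The remaining compatibility is between composition in $\Bico^{\mathrm{sSet}}$ and the frame description. I would then invoke the comparison results for weak simplicial model categories (Hinich \cite{Hi16}) or Lurie's HA 1.3.4.20-type statements to conclude.

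\textbf{Second equivalence: $N_{hc}(\Bico^{\mathrm{sSet}})\simeq N_{dg}(\Bico^{\mathrm{Ch}})$.} This is formal. The simplicial category is obtained from the $\Ch^{\le 0}$-enriched category (note $\Ee$ lands in $\Ch^{\le0}$) by applying $U\Gamma$. Lurie (HA, Prop.~1.3.1.17) gives a natural equivalence $N_{dg}(\Cc)\simeq N_{hc}(U\Gamma\,\Cc)$ for any dg-category $\Cc$, where one first passes to the connective truncation $\tau_{\le0}$ of the Hom complexes, which is exactly what $\Ee$ already does through $\tau_{\doq}$. One point needs care: the enrichment uses the lax monoidal structure of $\Ee$, while Dold--Kan composition uses the Eilenberg--Zilber/shuffle maps. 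I would check that $U\Gamma$ applied to the $\Ch^{\le0}$-category agrees with the $\Bico^{\mathrm{sSet}}$ defined above, using the lax structure of $\Gamma$ via Alexander--Whitney. Lurie's statement is formulated for exactly this construction.

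\textbf{Main obstacle.} The hard part is the first equivalence, and specifically making the ``weak'' simplicial model structure sufficient. Composition in $\Bico^{\mathrm{sSet}}$ is only associative up to the lax structure of $\Sch$ on third-quadrant bicomplexes, and $\Sch$ is lax rather than strong monoidal. Moreover, the tensoring $A\otimes K$ for $K\in\mathrm{sSet}$ exists only up to weak equivalence, because $\phi_{C,D}$ is only a weak equivalence, via Proposition~\ref{prop:colaxinf}. I would handle this by relying on Hinich's theorem that a weak simplicial model category (pullback-power axiom M7 only) still presents its Dwyer--Kan localization on bifibrant objects. The needed inputs are M7, verified above, and the fact that the homotopy category of $\Bico^{\mathrm{sSet}}$ agrees with $\mathrm{Ho}(\Bico)$. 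The latter follows because $\pi_0[A,B]=H^0\Ee\underline{\Hom}(A,B)=H_{\mathcal{BC}}^{0,0}(\underline{\Hom}(A,B))$, which is maps modulo pluripotential homotopy. By Theorem~1.21 of \cite{pluri}, this equals $\Hom_{\mathrm{Ho}}(A,B)$.
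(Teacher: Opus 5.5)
Your proposal follows the same route as the paper. The first equivalence comes from Hinich's Proposition~1.4.3 for weak simplicial model categories, using that every bicomplex is bifibrant, and the second from Lurie's HA, Proposition~1.3.1.17. Your extra checks, namely the cosimplicial frames $A\otimes\Inf(N(\kk\Delta^\bullet))$ and the identification $\pi_0[A,B]\cong H^{0,0}_{\mathcal{BC}}(\underline{\Hom}_{\Bico}(A,B))$, are correct but unnecessary once Hinich's result is invoked.

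One correction to your ``main obstacle'': composition in $\Bico^{\mathrm{Ch}}$ and $\Bico^{\mathrm{sSet}}$ is strictly associative, because transferring an enrichment along a lax monoidal functor such as $\Ee$ or $U\Gamma\Ee$ gives an honest enriched category. The only weakness is the lack of strict (co)tensors, which is exactly what Hinich's notion allows.
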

\begin{proof}
    The category \(\Bico\) is a locally presentable and cofibrantly generated model category. Moreover, \({\Bico^{\mathrm{sSet}}}\) is a weak simplicial model category. By Proposition 1.4.3 in~\cite{Hi16}, there is an equivalence of $\infty$-categories 
    \[
    \Bico[W^{-1}]\simeq N_{hc}(({\Bico^{\mathrm{sSet}}})_{cf}),
    \] 
    where \({(\Bico^{\mathrm{sSet}}})_{cf}\) denotes the subsimplicial category of cofibrant and fibrant objects, but every object in \(\Bico\) is fibrant and cofibrant.
    Lastly, by Proposition 1.3.1.17 of \cite{HA}, there is an equivalence of \(\infty\)-categories
    \[N_{hc}(\Bico^{\mathrm{sSet}})\simeq N_{dg}(\Bico^{\mathrm{Ch}}). \qedhere\]   
\end{proof}

\section{Real Bicomplexes}

As mentioned in the introduction, a natural source
for bicomplexes is the category of complex manifolds, through the complexified de Rham algebra of differential forms. By construction, in this setting, bicomplexes are defined over $\CC$ but have an underlying real structure. In this section, we adapt all the previous results to this setting.

\begin{defi}
    The category $\RR \Bicoc$, of $\RR$-bicomplexes, has as objects bicomplexes together with a $\CC$-antilinear involution $\sigma : A \to A$ such that $\sigma(A^{p,q}) = A^{q,p}$ and $\sigma \del \sigma = \delb$. The morphisms of $\RR \Bicoc$ are morphisms of bicomplexes that commute with the involutions.
\end{defi}
\begin{rema}
    An object of $\RR \Bicoc$ is equivalently defined as a graded $\RR$-complex $(A,d)$ such that the complexification $A \otimes \CC$ admits a bigrading and its differential splits into a sum $d = \del + \delb$, where $\del$ has bidegree $(1,0)$, $\delb$ has bidegree $(0,1)$, and $\delb$ is the conjugate of $\del$.
\end{rema}
The tensor product and the internal hom of $\RR$-bicomplexes are defined like in $\Bicoc$ with the involutions given by
\begin{align*}
\sigma : (A \otimes B)^{p,q} &\longrightarrow (A \otimes B)^{q,p}
&\qquad
\sigma : \underline{\Hom}_{\Bicoc}(A,B)^{p,q} &\longrightarrow \underline{\Hom}_{\Bicoc}(A,B)^{q,p} \\
a \otimes b &\longmapsto \sigma_A(a) \otimes \sigma_B(b)
&\qquad
f &\longmapsto \sigma_B \, f \, \sigma_A
\end{align*}

These make $\RR \Bicoc$ into a closed symmetric monoidal category.
\begin{defi}
    An \textit{$\RR$-pluripotential homotopy} between morphisms $f,g \in \Hom_{\RR \Bicoc}(A,B)$ is a morphism $h : A \to B$ of bidegree $(-1,-1)$ such that
    \[ [\del,[\delb,h]] = f-g \]
    and such that $\sigma h \sigma = -h$. If there exists a homotopy between $f$ and $g$, we write $f \sim g$. A morphism $f \in \Hom_{\RR \Bicoc}(A,B)$ is said to be an \textit{$\RR$-pluripotential homotopy equivalence} if there exists $g \in \Hom_{\RR \Bicoc}(B,A)$ such that $fg \sim id_B$ and $gf \sim id_A$.
\end{defi}
The class of $\RR$-pluripotential homotopy equivalences coincides with the class of morphisms of $\RR$-bicomplexes that are pluripotential weak equivalences. By Theorem 1.7 of \cite{pluri}, there is a model structure in $\RR \Bicoc$ with pluripotential weak equivalences as weak equivalences. Cofibrations are given by injective morphisms and fibrations are the surjective maps. The tensor product and internal Hom endow \(\RR\Bicoc\) with the structure of a symmetric monoidal model category, and hence \(\RR\Bicoc\) is a \(\RR\Bicoc\)-enriched model category.

We now define the variants of the inflation and $\Bb$ functors for $\RR$-bicomplexes. Consider the family of $\RR$-bicomplexes $\{\leftindex_\RR{\ele}_n\}_{n\in \ZZ}$ defined as in equation (\ref{eles-eq}) with differentials as in (\ref{eledif-eq}) and involution $\sigma : \leftindex_\RR{\ele}_n^{i,j} \to \leftindex_\RR{\ele}^{j,i}$ given by
\begin{align}
    \label{eleinv-eq}
    \sigma(i,j)_n = \begin{cases}
        (j,i)_n & \text{if } i+j = n \\
        -(j,i)_n & \text{if } i+j = n -1 \\
        -(j,i)_n & \text{if } i+j = n +1.
    \end{cases}
\end{align}
\begin{defi}
    The \textit{$\RR$-inflation} functor 
    \[ \Inf_\RR : \mathrm{Ch}_\RR \to \RR \Bicoc\]
    on a cochain complex $C \in \mathrm{Ch}_\RR$ is defined by setting
    \[ \Inf_\RR(C) = \Bigg( \bigoplus_{n \in \ZZ} \leftindex_\RR{\ele}_n \otimes (C \otimes \CC)^n, \del, \delb \Bigg),\]
    where we see $(C\otimes \CC)^n$ as a bicomplex sitting in bidegree $(0,0)$. The differentials are defined as in (\ref{infdif-eq}) and the involution is given by
    \[ (i,j)_n \otimes (c \otimes z) \mapsto \sigma{(i,j)}_n \otimes (c \otimes \overline{z}).\]
\end{defi}
The category $\RR \Bicoc$ is locally presentable and, like the previous inflation functor, $\Inf_\RR$ preserves colimits. Hence, it admits a right adjoint.
\begin{defi}
    The functor $\Bb_\RR\colon\RR \Bicoc \to \mathrm{Ch}_\RR$ is defined on an $\RR$-bicomplex $A$ as
    \[ \Bb_\RR^k(A) = \begin{cases}
        -i \cdot A_\RR^{k-1} \cap \bigoplus_{\substack{r+s=k-1\\r<0,s<0}} A^{r,s} &\text{if }k\leq -1\\
        ~\\
        A_\RR^k \cap \bigoplus_{\substack{r+s=k\\r\geq 0,s\geq 0}} A^{r,s} &\text{if }k\geq 0,
    \end{cases}\]
    where $A_\RR$ is the Tot complex of the fixed points by the involution of $A$. The differential is given by:
    \[
    \cdots\overset{\pr\circ d}{\longrightarrow}\Bb_\RR^{-2}(A)\overset{\pr\circ d}{\longrightarrow}\Bb_\RR^{-1}(A)\overset{ \del\delb}{\longrightarrow}\Bb_\RR^{0}(A)\overset{d}{\longrightarrow}\Bb_\RR^{1}(A)\overset{d}{\longrightarrow}\cdots
    \]
\end{defi}
\begin{theo}
    \label{realadj-theo}
    The functor $\Inf_\RR$ is left adjoint to $\Bb_\RR$.
\end{theo}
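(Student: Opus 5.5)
We reduce Theorem~\ref{realadj-theo} to Theorem~\ref{adj-theo} by complexification and Galois descent. Since $\Inf_\RR$ preserves colimits and $\RR\Bicoc$ is locally presentable, a right adjoint $R_\RR$ exists. Following Proof~1 of Theorem~\ref{adj-theo}, it is computed on representables as
\[
R_\RR(A)^n=\Hom_{\RR\Bicoc}(\Inf_\RR(D^n),A),
\]
where $D^n$ is now the real disk $\RR\to\RR$ in degrees $n,n+1$. The differential is given by precomposition with $\Inf_\RR(\delta)$. The task is to identify this complex with $\Bb_\RR(A)$.

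The first step is to observe that, after forgetting the real structure, $\Inf_\RR(D^n)$ is exactly $\Inf(D^n_\CC)$, since $\leftindex_\RR{\ele}_n$ has the same underlying bicomplex as $\ele_n$. So a morphism of $\RR$-bicomplexes $\Inf_\RR(D^n)\to A$ is a morphism of complex bicomplexes $g\colon\Inf(D^n_\CC)\to A$ satisfying $g\sigma=\sigma g$. By Proof~1 such a $g$ is determined by the images of the lowest-bidegree generators:
\begin{itemize}
\item for $n\ge 0$, the elements $(i,n-i)_n\otimes 1$;
\item for $n<0$, the elements $(i,n-1-i)_n\otimes 1$ with $i,n-1-i\le -1$.
\end{itemize}
Via the complex isomorphism $f^n$ of Proof~1, these give an element of $\Bb(A)^n$. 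Equivalently, we can use the unit $\eta$ of Proof~2 and send $g$ to $\Bb(g)(\eta(1))$.

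The second step is to impose compatibility with $\sigma$. By \eqref{eleinv-eq} and the involution $c\otimes z\mapsto c\otimes\bar z$:
\begin{itemize}
\item For $n\ge0$, $\sigma$ swaps $(i,n-i)_n$ with $(n-i,i)_n$ with sign $+1$. Because the binomial coefficients are symmetric, $\binom{n}{i}=\binom{n}{n-i}$, the element $\eta(1)=\sum\binom{n}{p}(p,q)_n$ is $\sigma$-fixed. Hence $g$ is real if and only if the corresponding element $\sum_p\binom np b^{p,q}$ of $\Bb(A)^n$ lies in $A_\RR$. This gives $\Bb^n_\RR(A)$.
\item For $n<0$, the generators sit in total degree $n-1$, and $\sigma$ acts on them with sign $-1$. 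Since $\Le(-n,-p)=\Le(-n,-q)$ when $p+q=n-1$, the element $\eta(1)$ is sent to $-\eta(1)$. So $g$ is real if and only if the element $x\in\Bb(A)^n$ satisfies $\sigma x=-x$, that is, $x\in -i\cdot A_\RR$. This matches the definition of $\Bb^n_\RR(A)$ for $n\le -1$.
\end{itemize}
The constant $(-1)^{n+1}$ in $f^n$ is real and does not affect this argument.

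The third step is to check the differentials. The map of Proof~1 intertwines precomposition with $F(\delta)$ and $d_\Bb$. It restricts to the real and imaginary parts described above, provided $d_\Bb$ preserves them. For $k\ge0$ it does, because $d=\del+\delb$ is real. At the transition $-1\to0$ we need $\del\delb(-i\,A_\RR)\subseteq A_\RR$. This holds because $\sigma(\del\delb x)=\delb\del\sigma x=-\del\delb\sigma x$, so the sign $-1$ from $\sigma$ on $-iA_\RR$ cancels. For $k<-1$, $\pr\circ d$ is real, since projection onto the set of $(r,s)$ with $r,s<0$ is $\sigma$-stable. This gives a natural isomorphism $R_\RR\cong\Bb_\RR$.

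Alternatively, one can restrict the unit and counit of Proof~2 to real parts, using the same symmetry of $\binom{n}{p}$ and $\Le$, and verify the triangle identities, which hold already over $\CC$.

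The main obstacle is the sign bookkeeping in the second step: confirming that $\sigma$ acts on the lowest generators of $\Inf_\RR(D^n)$ by exactly the sign that produces the factor $-i$ in negative degrees. This requires tracking the signs in \eqref{eleinv-eq} together with the connecting differential \eqref{infdif-eq}. I would first verify it by hand for $n=-1$, using the explicit picture of $F(-1)$ in Proof~1, and then extend it to all $n$.
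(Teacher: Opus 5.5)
Your proposal is correct and is essentially the paper's argument. In both, the right adjoint is computed on the representables $\Inf_\RR(D^n)$, and a morphism is real exactly when its values on the lowest generators are compatible with $\sigma$. By \eqref{eleinv-eq}, $\sigma$ acts on those generators by $+1$ for $n\ge 0$ and by $-1$ for $n<0$, which produces the factor $-i$; the identification with $\Bb_\RR$ is then transported from Theorem~\ref{adj-theo}. You also use the symmetries $\binom{n}{p}=\binom{n}{q}$ and $\Le(-n,-p)=\Le(-n,-q)$ to show the complex isomorphism restricts to real parts, and check that $d_{\Bb}$ preserves them; these are details the paper compresses into ``follows as in Theorem~\ref{adj-theo}''.
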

\begin{proof}
    The proof follows as in Theorem \ref{adj-theo}. The only difference is that we now identify $\RR \Bicoc$ with the category of $\kk$-linear functors $[ \langle \del, \delb \rangle_\RR, \CC\textrm{-}\mathrm{Mod}]$, where $\langle \del, \delb \rangle_\RR$ is the following category. Its objects are pairs $(m,n) \in \ZZ^2$ and morphisms are
    \begin{equation*}
        \langle \del, \delb \rangle_\RR( (m,n), (m',n')) = \begin{cases}
            \del \kk & \text{if } m - m' = 1 \text{ and } n = n' \\
            \delb\kk & \text{if } n - n' = 1 \text{ and } m = m' \\
            \del \delb \kk & \text{if } m - m' = 1 \text{ and } n - n' = 1 \\
            \mathrm{id} \kk \oplus \sigma & \text{if } m = m' = n = n' \\
            \mathrm{id}\kk & \text{if } n = n' \text{ and } m = m' \\
            \sigma \kk  &  \text{if } m = n' \text{ and } m' = n\\
            0 & \textit{otherwise}.
        \end{cases}
    \end{equation*}
    We define the composition so as to have $\del \delb = - \delb \del$ and $\sigma \sigma = \mathrm{id}$. Then, we define $F = \Inf_\RR \circ \iota$, where $\iota : \langle \delta \rangle \to \mathrm{Ch}_\RR$ is the Yoneda embedding and $\langle \delta \rangle$ is defined as in Theorem \ref{adj-theo}. The right adjoint $R$ to $\Inf_\RR$ is given by
    \[ R(A)^n = \Hom_{\RR \Bicoc}(F(n),A). \]
    We have 
    \[F(n) = (\ele_n \otimes \CC) \oplus (\ele_{n+1} \otimes \CC),\] its differentials are all non-zero multiples of the identity, and the involution is as in (\ref{eleinv-eq}). Since any morphism in $\Hom_{\RR \Bicoc}(F(n),A)$ has to commute with the involution, it is determined by the images of the fixed points of the copies of $\CC$ in lowest bidegrees. Note that for $n \geq 0$, the elements of $\leftindex_\RR{\ele}_n$ in lowest bidegree are $(p,q)_n$ for $p+q = n$, and the space of fixed points of the involution is generated by
    \[ (p,q)_n + (q,p)_n. \]
    While for $n < 0$, the elements of lowest bidegree are $(p,q)_n$ for $p+q = n-1$, and the fixed points are generated by
    \[ -i((p,q)_n + (q,p)_n). \] 
    Hence, identifying $f \in \Hom_{\RR \Bicoc}(F(n),A)$ with  the images $f((p,q) + (q,p))$, we get an isomorphism between $\Hom_{\RR \Bicoc}(F(n),A)$ and
    \begin{align*}
        &A_\RR^n \cap \bigoplus_{i \geq 0}^{n} A^{i,n-i} \text{\qquad \qquad \quad \hspace{0.1ex} for } n \geq 0, \\
        &-i \cdot A_\RR^{n-1} \cap \bigoplus_{i = n}^{-1} A^{i,n-i-1} \text{\quad for } n < 0.
    \end{align*}
    Thus, $R(A) = \Bb_\RR(A)$ as bigraded modules. The proof that there is an isomorphism of complexes between $R(A)$ and $\Bb_{\RR}(A)$ follows as in Theorem \ref{adj-theo}. 
\end{proof}
As a consequence, since $\Inf_\RR$ preserves weak equivalences and
cofibrations, it follows that $\Bb_\RR$ preserves weak equivalences and fibrations. Note as well that we have an adjunction

    \[
    \begin{tikzcd}[ampersand replacement=\&]
	{\Inf_\RR: \mathrm{Ch}_\RR^{\leq 0}} \& {\RR \Bicoc^{\doq} : \Bb_\RR}
	\arrow[shift left, from=1-1, to=1-2]
	\arrow[shift left, from=1-2, to=1-1]
    \end{tikzcd}
    \]

We now study the monoidal properties of the above adjunction.
\begin{prop}
    The functor $\Inf_\RR:\mathrm{Ch}_\RR^{\leq 0}\to \RR \Bicoc^{\doq}$ 
    is oplax monoidal with structure morphisms $\phi_{C,D}$, for each $C,D \in  \mathrm{Ch}_\RR^{\leq 0}$, given by the formulas in (\ref{eq:colaxmorph}). Moreover, each $\phi_{C,D}$ is an $\RR$-pluripotential weak equivalence.
\end{prop}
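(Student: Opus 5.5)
The plan is to reduce everything to Proposition~\ref{prop:colaxinf}. The underlying bicomplex of $\Inf_\RR(C)$ is $\Inf(C\otimes\CC)$, and the formulas (\ref{eq:colaxmorph}) are the same. So the maps $\phi_{C,D}$ are already morphisms of bicomplexes, and they already satisfy coassociativity, counitality and symmetry, by the computations in that proof applied to $C\otimes\CC$ and $D\otimes\CC$. Two things remain. First, $\phi_{C,D}$ must commute with the involutions. Second, it must be an $\RR$-pluripotential weak equivalence.

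\textbf{Compatibility with the involutions.} The conjugation on the $(C\otimes\CC)$-factors passes through $\phi_{C,D}$ trivially. It therefore suffices to show that each $\iota^{k,\ell}\colon \leftindex_\RR{\ele}_{k+\ell}\to \leftindex_\RR{\ele}_k\otimes\leftindex_\RR{\ele}_\ell$ commutes with $\sigma$ as given in (\ref{eleinv-eq}). Here $k,\ell\le 0$, so only the cases $i+j=n$ and $i+j=n-1$ occur.

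Take a summand $(\alpha,\beta)_k\otimes(\delta,\gamma)_\ell$ of $\iota^{k,\ell}(i,j)_{k+\ell}$. Applying $\sigma$ sends it to $\pm(\beta,\alpha)_k\otimes(\gamma,\delta)_\ell$. The sign is $-1$ exactly when one factor sits off the ``diagonal'' $i+j=n$. No Koszul sign appears, because $\sigma$ has bidegree-swapping type and is applied factorwise as in the definition of the tensor product of $\RR$-bicomplexes.

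On the other side, $\sigma(i,j)_{k+\ell}=\pm(j,i)_{k+\ell}$. Expanding $\iota^{k,\ell}(j,i)_{k+\ell}$ gives the same terms after reindexing $(\alpha,\beta,\delta,\gamma)\mapsto(\beta,\alpha,\gamma,\delta)$. The sign $(-1)^{(\alpha+\beta)(\delta+\gamma+\ell)}$ is symmetric under this reindexing.

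So the check reduces to a parity count. When $i+j=k+\ell$, both factors sit on their diagonals and all signs are $+$. When $i+j=k+\ell-1$, exactly one factor has $d_x=$ its index minus one, so exactly one factor picks up a $-$ sign. This matches the $-$ sign on the left.

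\textbf{Weak equivalence.} Being an $\RR$-pluripotential weak equivalence means inducing isomorphisms on $H_{\mathcal{BC}}$ and $H_{\mathcal{A}}$. These are computed on the underlying $\CC$-bicomplex. By Proposition~\ref{prop:colaxinf}, the underlying map of $\phi_{C,D}$ is $\phi_{C\otimes\CC,D\otimes\CC}$, which is a pluripotential weak equivalence. This holds because $C\otimes\CC$ and $D\otimes\CC$ lie in $\Ch^{\le0}$ over $\CC$. Since the model structure on $\RR\Bicoc$ has the pluripotential weak equivalences as its weak equivalences, this finishes the argument.

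Oplax monoidality then follows. The unit map $\Inf_\RR(\RR)\to\CC$ is the identity and is clearly real.

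\textbf{Main obstacle.} The sign bookkeeping in the involution check is the only step I expect to be delicate. In particular, it must be confirmed that the tensor involution introduces no extra Koszul signs that would spoil the parity matching. If such signs do appear, I would first try to absorb them by checking the case $k=\ell=-1$ explicitly and adjusting the sign convention in (\ref{eleinv-eq}).
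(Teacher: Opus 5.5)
Your proposal is correct and follows the paper's proof. Like the paper, you reduce to checking $\sigma\iota^{k,\ell}\sigma=\iota^{k,\ell}$ and use the same parity count: for $i+j=k+\ell$ no signs appear, while for $i+j=k+\ell-1$ exactly one tensor factor lies off its diagonal, so both sides acquire a single minus sign. Your explicit reduction of the weak-equivalence claim to the underlying map $\phi_{C\otimes\CC,D\otimes\CC}$ and Proposition~\ref{prop:colaxinf} spells out what the paper leaves implicit, and the hedge about adjusting the sign convention in (\ref{eleinv-eq}) is unnecessary.
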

\begin{proof}
    The proof that $\Inf_\RR$ is oplax monoidal is like the one of Proposition \ref{prop:colaxinf}. It only remains to check that the structure morphisms are maps of $\RR$-bicomplexes. To do so, it is sufficient to check that the maps
    \[\iota^{pq}:\leftindex_\RR{\ele}_{p+q}\to\leftindex_\RR{\ele}_p\otimes \leftindex_\RR{\ele}_q\]
     given by
    \[\iota^{pq}(i,j)_{p+q}=\sum_{\substack{\alpha+\delta=i\\\beta+\gamma=j}}(-1)^{(\alpha+\beta)(\delta+\gamma+q)}(\alpha,\beta)_p\otimes(\delta,\gamma)_q\]
    are maps of $\RR$-bicomplexes. That is, 
    \[ \sigma \iota^{pq}\sigma = \iota^{pq}. \] 
    This follows from the observation that if $i+j = p+q$, then the action of $\sigma$ does not introduce any signs to either $(i,j)_{p+q}$ or the summands $(\alpha,\beta)_p \otimes (\delta,\gamma)_q$ of $\iota^{pq}(i,j)_{p+q}$ (see equation (\ref{eleinv-eq})). And if $i+j = p+q-1$, then for each summand $(\alpha,\beta)_p \otimes (\delta,\gamma)_q$, either $\alpha + \beta = p-1$ or $\delta + \gamma = q-1$, but not both. Then, the action of $\sigma$ on $(i,j)_{p+q}$ and on each summand introduces a minus sign.
\end{proof}
As a consequence of the previous proposition, $\Bb_\RR$ is a lax monoidal functor when restricted to third quadrant bicomplexes. Thus, we may endow $\RR \Bicoc$ with a dg-structure by defining the Hom-objects for $A,B \in \RR \Bicoc$ as
\[ [A,B] = \Ee_\RR(\underline{\Hom}_{\RR \Bicoc}(A,B)), \]
where $\Ee_\RR$ is the composition of functors
\[ \Ee_\RR : \RR \Bicoc \xrightarrow{\tau_{\doq}} \RR \Bicoc^{\doq} \xrightarrow[]{\Bb_\RR} \mathrm{Ch}_\RR^{\leq 0}\]
and $\tau_{\doq}$ is defined as in (\ref{bitrunc-eq}). Denote the resulting dg-category by $\RR \Bicoc^{\mathrm{Ch}}$. 

Similarly, we may obtain a simplicial category \(\RR\Bicoc^{\mathrm{sSet}}\) whose Hom-objects are given by
\[ [A,B] = U\Gamma\Ee_\RR(\underline{\Hom}_{\RR \Bicoc}(A,B)), \]
where \(U\Gamma\) is the composition functor
\[ 
\mathrm{Ch}^{\leq 0}_{\RR} \xrightarrow[]{\Gamma} s \mathrm{Mod}_\RR \xrightarrow[]{U} \mathrm{sSet}.
\]
given by the Dold-Kan correspondence and the forgetful functor.

As before, denote by \(\RR \Bicoc[W^{-1}]\) the \(\infty\)-category given by the Dwyer-Kan localization with respect to pluripotential weak equivalences. Precisely the same arguments as in Theorem \ref{bicoinf-theo} prove the theorem below.
\begin{theo}
    There is a chain of equivalences of \(\infty\)-categories
    \[
    \RR \Bicoc[W^{-1}] \simeq N_{hc}(\RR\Bicoc^{\mathrm{sSet}}) \simeq N_{dg}(\RR\Bicoc^{\mathrm{Ch}}).
    \]
\end{theo}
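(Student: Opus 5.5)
The plan is to mirror the proof of Theorem~\ref{bicoinf-theo} step by step, checking that each ingredient has an $\RR$-analogue in $\RR\Bicoc$. There are two equivalences to establish, and I would treat them separately.

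\emph{First equivalence: $\RR\Bicoc[W^{-1}]\simeq N_{hc}(\RR\Bicoc^{\mathrm{sSet}})$.} I would apply Proposition~1.4.3 of \cite{Hi16}. Its hypotheses are three.
\begin{enumerate}
\item $\RR\Bicoc$ must be a locally presentable, cofibrantly generated model category. Local presentability was noted above, and the model structure with pluripotential weak equivalences is cofibrantly generated by Theorem~1.7 of \cite{pluri}.
\item $\RR\Bicoc^{\mathrm{sSet}}$ must be a weak simplicial model category.
\item Every object must be fibrant and cofibrant. This holds because cofibrations are the injections and fibrations are the surjections. Hence $(\RR\Bicoc^{\mathrm{sSet}})_{cf}$ is the whole simplicial category.
\end{enumerate}

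For the second hypothesis I would repeat the Yoneda-type computation of the $n$-simplices, using the adjunctions $N\dashv\Gamma$, $i\dashv\tau_{\doq}$, $\Inf_\RR\dashv\Bb_\RR$ (Theorem~\ref{realadj-theo}) and $\otimes\dashv\underline{\Hom}$. This shows that $[A,B]_n$ is corepresented by $A\otimes\Inf_\RR(N(\RR\Delta^n))$, an object of $\RR\Bicoc$. The pullback-power axiom (M7) then reduces to the following facts.
\begin{itemize}
\item $\RR\Bicoc$ is an $\RR\Bicoc$-enriched (symmetric monoidal) model category.
\item Each functor in the composite $U\Gamma\circ\Bb_\RR\circ\tau_{\doq}$ is right Quillen:
\begin{itemize}
\item $\tau_{\doq}$ is right adjoint to the inclusion, which preserves injections and weak equivalences.
\item $\Bb_\RR$ preserves fibrations and weak equivalences, as remarked right after Theorem~\ref{realadj-theo}.
\item $U\Gamma$ is the usual right Quillen functor from Dold--Kan.
\end{itemize}
\end{itemize}

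\emph{Second equivalence: $N_{hc}(\RR\Bicoc^{\mathrm{sSet}})\simeq N_{dg}(\RR\Bicoc^{\mathrm{Ch}})$.} This is formal. By construction the simplicial mapping spaces are $U\Gamma$ of the dg mapping complexes, which lie in $\mathrm{Ch}^{\leq 0}_\RR$. These mapping complexes form a genuine dg-category because $\Ee_\RR$ is lax monoidal, as the composite of the lax monoidal $\tau_{\doq}$ (adjoint to the oplax inclusion) and the lax monoidal $\Bb_\RR$. The lax structure of $\Bb_\RR$ comes from the oplax structure of $\Inf_\RR$ in the previous proposition via the same adjunction argument as in the complex case. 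Proposition~1.3.1.17 of \cite{HA} then gives the equivalence.

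\emph{Main obstacle.} The step I expect to need the most care is checking that the real versions of the compatibility ingredients actually hold. First, $\tau_{\doq}$ must be well defined on $\RR\Bicoc$. It preserves the involution because $\sigma$ exchanges $\ker\del$ and $\ker\delb$ and preserves $\ker\del\cap\ker\delb$ in bidegree $(0,0)$. Second, the M7 argument needs the cofibrations and trivial cofibrations of $\RR\Bicoc$ to be closed under pushout-products with the generating $\Inf_\RR(N(\RR\Delta^n))$. I would handle this by invoking the symmetric monoidal model structure on $\RR\Bicoc$ stated above, just as in the complex case.
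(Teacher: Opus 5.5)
Your proposal is correct and follows the same route as the paper. The paper proves this statement by saying only that the argument of Theorem~\ref{bicoinf-theo} carries over verbatim: Proposition~1.4.3 of \cite{Hi16} applies to a locally presentable, cofibrantly generated weak simplicial model category in which every object is fibrant and cofibrant, and Proposition~1.3.1.17 of \cite{HA} gives the second equivalence. Your additional checks are exactly the details the paper leaves implicit: that $\tau_{\doq}$ respects the involution, that $[A,B]_n$ is corepresented via $\Inf_\RR$, and that each functor in $U\Gamma\circ\Bb_\RR\circ\tau_{\doq}$ is right Quillen.
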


\bibliographystyle{alpha}
\bibliography{biblio}

@book{demailly,
  author = {Demailly, Jean-Pierre},
  title = {Complex Analytic and Differential Geometry},
  publisher = {Institut Fourier, Universit{\'e} de Grenoble I},
  year = {2012},
  note = {Available at \url{https://www-fourier.ujf-grenoble.fr/~demailly/manuscripts/agbook.pdf}}
}

@misc{Piovani,
      title={On the cohomology of the {B}igolin complex}, 
      author={Riccardo Piovani},
      year={2025},
      eprint={2405.20823},
      archivePrefix={arXiv},
      primaryClass={math.DG},
      url={https://arxiv.org/abs/2405.20823}, 
      howpublished = {Preprint, {arXiv}:2405.20823}
}

@misc{Jonas2,
      title={Strong formality of toric and homogeneous compact {K}\"ahler manifolds}, 
      author={Giovanni Placini and Jonas Stelzig and Leopold Zoller},
      year={2025},
      eprint={2510.17288},
      archivePrefix={arXiv},
      primaryClass={math.AT},
      url={https://arxiv.org/abs/2510.17288}, 
      howpublished = {Preprint, {arXiv}:2510.17288},
}

@misc{Jonas1,
 author = {Luc{\'{\i}}a Mart{\'{\i}}n-Merch{\'a}n and Jonas Stelzig},
 title = {Holomorphic linking numbers, {ABC} {Massey} products, and {Calabi}-{Yau} 3-folds},
 year = {2025},
 howpublished = {Preprint, {arXiv}:2512.02187},
 url = {https://arxiv.org/abs/2512.02187},
 arXiv = {arXiv:2512.02187}
}

@misc{schweizer,
      title={Autour de la cohomologie de {Bott}-{Chern}}, 
      author={Michel Schweitzer},
      year={2007},
      eprint={0709.3528},
      archivePrefix={arXiv},
      primaryClass={math.AG},
      url={https://arxiv.org/abs/0709.3528}, 
      howpublished = {Preprint, {arXiv}:0709.3528},
}

@article {pluri,
    AUTHOR = {Stelzig, Jonas},
     TITLE = {Pluripotential homotopy theory},
   JOURNAL = {Adv. Math.},
  FJOURNAL = {Advances in Mathematics},
    VOLUME = {460},
      YEAR = {2025},
     PAGES = {Paper No. 110038, 61},
      ISSN = {0001-8708,1090-2082},
   MRCLASS = {32C99 (18N40 57R19)},
  MRNUMBER = {4831838},
       DOI = {10.1016/j.aim.2024.110038},
       URL = {https://doi.org/10.1016/j.aim.2024.110038},
}

@article{BC,
author = {Raoul Bott and Shiing-Shen Chern},
title = {{Hermitian vector bundles and the equidistribution of the zeroes of their holomorphic sections}},
volume = {114},
journal = {Acta Mathematica},
number = {},
publisher = {Institut Mittag-Leffler},
pages = {71 -- 112},
year = {1965},
doi = {10.1007/BF02391818},
URL = {https://doi.org/10.1007/BF02391818}
}

@article{Aep,
author = {Alfred Aeppli},
title = {Some exact sequences in cohomology theory for {K}ähler manifolds},
volume = {12},
journal = {Pacific Journal of Mathematics},
number = {3},
publisher = {},
pages = {791–799},
year = {1962},
doi = {10.2140/pjm.1962.12.791},
URL = {}
}

@misc{HA, 
title={Higher Algebra}, 
author={Lurie, Jacob}, 
year={2014}, 
note = {Book draft. Available in \url{https://www.math.ias.edu/~lurie/}}}

@article{zbMATH03320556,
 author = {Bigolin, Bruno},
 title = {Gruppi di {Aeppli}},
 fjournal = {Annali della Scuola Normale Superiore di Pisa. Scienze Fisiche e Matematiche. III. Ser},
 journal = {Ann. Sc. Norm. Super. Pisa, Sci. Fis. Mat., III. Ser.},
 issn = {0036-9918},
 volume = {23},
 pages = {259--287},
 year = {1969},
 language = {Italian},
 url = {https://eudml.org/doc/83489},
 zbMATH = {3320556},
 Zbl = {0202.07601}
}

@article{StelzigSchRemarks,
 author = {Stelzig, Jonas},
 title = {Some remarks on the {Schweitzer} complex},
 fjournal = {Annales de l'Institut Fourier},
 journal = {Ann. Inst. Fourier},
 issn = {0373-0956},
 volume = {75},
 number = {1},
 pages = {35--47},
 year = {2025},
 language = {English},
 doi = {10.5802/aif.3645},
 zbMATH = {7983031},
 Zbl = {1558.32048}
}

@article {SchBro03,
    AUTHOR = {Schwede, Stefan and Shipley, Brooke},
     TITLE = {Equivalences of monoidal model categories},
   JOURNAL = {Algebr. Geom. Topol.},
  FJOURNAL = {Algebraic \& Geometric Topology},
    VOLUME = {3},
      YEAR = {2003},
     PAGES = {287--334},
      ISSN = {1472-2747,1472-2739},
   MRCLASS = {55U40 (18D10 18G30 18G35 55P43 55P62 55U35)},
  MRNUMBER = {1997322},
MRREVIEWER = {L.\ Gaunce\ Lewis, Jr.},
       DOI = {10.2140/agt.2003.3.287},
       URL = {https://doi-org.sire.ub.edu/10.2140/agt.2003.3.287},
}

@misc{OEIS_A003506,
  author       = {{OEIS Foundation Inc.}},
  title        = {{OEIS} Sequence {A003506}},
  howpublished = {\url{https://oeis.org/A003506}},
  year         = {2026},
  note         = {Accessed: 2026-01-16}
}

@book {Angella14,
    AUTHOR = {Angella, Daniele},
     TITLE = {Cohomological aspects in complex non-{K}\"ahler geometry},
    SERIES = {Lecture Notes in Mathematics},
    VOLUME = {2095},
 PUBLISHER = {Springer, Cham},
      YEAR = {2014},
     PAGES = {xxvi+262},
      ISBN = {978-3-319-02440-0; 978-3-319-02441-7},
   MRCLASS = {53C55 (32G07 32Q55 32Q60 53D18)},
  MRNUMBER = {3137419},
MRREVIEWER = {Anna\ M.\ Fino},
       DOI = {10.1007/978-3-319-02441-7},
       URL = {https://doi-org.sire.ub.edu/10.1007/978-3-319-02441-7},
}

@article{MiSte24,
    AUTHOR = {Milivojevi\'c, Aleksandar and Stelzig, Jonas},
     TITLE = {Bigraded notions of formality and
              {A}eppli-{B}ott-{C}hern-{M}assey products},
   JOURNAL = {Comm. Anal. Geom.},
  FJOURNAL = {Communications in Analysis and Geometry},
    VOLUME = {32},
      YEAR = {2024},
    NUMBER = {10},
     PAGES = {2901--2933},
      ISSN = {1019-8385,1944-9992},
   MRCLASS = {32C35 (55S30)},
  MRNUMBER = {4844967},
MRREVIEWER = {Giovanni\ Placini},
       DOI = {10.4310/cag.241231035705},
       URL = {https://doi-org.sire.ub.edu/10.4310/cag.241231035705},
}

@article {GFGS26,
    AUTHOR = {Garcia-Fernandez, Mario and Gonzalez Molina, Raul and Streets,
              Jeffrey},
     TITLE = {Pluriclosed flow and the {H}ull-{S}trominger system},
   JOURNAL = {Adv. Math.},
  FJOURNAL = {Advances in Mathematics},
    VOLUME = {485},
      YEAR = {2026},
     PAGES = {Paper No. 110699, 95},
      ISSN = {0001-8708,1090-2082},
   MRCLASS = {53E30 (53C55 58J35)},
  MRNUMBER = {4999594},
       DOI = {10.1016/j.aim.2025.110699},
       URL = {https://doi-org.sire.ub.edu/10.1016/j.aim.2025.110699},
}

@article {CiLivWhi25,
    AUTHOR = {Cirici, Joana and Livernet, Muriel and Whitehouse, Sarah},
     TITLE = {Model category structures on truncated multicomplexes for
              complex geometry},
   JOURNAL = {Bull. Lond. Math. Soc.},
  FJOURNAL = {Bulletin of the London Mathematical Society},
    VOLUME = {57},
      YEAR = {2025},
    NUMBER = {12},
     PAGES = {4163--4177},
      ISSN = {0024-6093,1469-2120},
   MRCLASS = {18N40 (18G40 32C99 32Q55)},
  MRNUMBER = {5007401},
       DOI = {10.1112/blms.70239},
       URL = {https://doi-org.sire.ub.edu/10.1112/blms.70239},
}

@article{CiGaSo25,
url = {https://doi.org/10.1515/coma-2025-0012},
title = {{\(A_\infty\)}-structures for {B}ott-{C}hern and {A}eppli cohomologies},
author = {Joana Cirici and Roger Garrido-Vilallave and Anna Sopena-Gilboy},
pages = {20250012},
volume = {12},
number = {1},
journal = {Complex Manifolds},
doi = {doi:10.1515/coma-2025-0012},
year = {2025},
lastchecked = {2026-02-07}
}

@phdthesis{SGtesi,
  author = {Sopena-Gilboy, Anna},
  title = {Pluripotential operadic calculus},
  school = {Universitat de Barcelona},
  year = {2026}
}

@article {AnTo15,
    AUTHOR = {Angella, Daniele and Tomassini, Adriano},
     TITLE = {On {B}ott-{C}hern cohomology and formality},
   JOURNAL = {J. Geom. Phys.},
  FJOURNAL = {Journal of Geometry and Physics},
    VOLUME = {93},
      YEAR = {2015},
     PAGES = {52--61},
      ISSN = {0393-0440,1879-1662},
   MRCLASS = {32C35 (53C55)},
  MRNUMBER = {3340173},
MRREVIEWER = {Cristian\ Ida},
       DOI = {10.1016/j.geomphys.2015.03.004},
       URL = {https://doi-org.sire.ub.edu/10.1016/j.geomphys.2015.03.004},
}

@article {TaTo17,
    AUTHOR = {Tardini, Nicoletta and Tomassini, Adriano},
     TITLE = {On geometric {B}ott-{C}hern formality and deformations},
   JOURNAL = {Ann. Mat. Pura Appl. (4)},
  FJOURNAL = {Annali di Matematica Pura ed Applicata. Series IV},
    VOLUME = {196},
      YEAR = {2017},
    NUMBER = {1},
     PAGES = {349--362},
      ISSN = {0373-3114,1618-1891},
   MRCLASS = {32Q55 (32C35 53C55)},
  MRNUMBER = {3600870},
MRREVIEWER = {Daniele\ Angella},
       DOI = {10.1007/s10231-016-0575-6},
       URL = {https://doi-org.sire.ub.edu/10.1007/s10231-016-0575-6},
}

@article {SfeTo22,
 author = {Sferruzza, Tommaso and Tomassini, Adriano},
 title = {Dolbeault and {Bott}-{Chern} formalities: deformations and \(\partial\overline\partial\)-lemma},
 fjournal = {Journal of Geometry and Physics},
 journal = {J. Geom. Phys.},
 issn = {0393-0440},
 volume = {175},
 pages = {19},
 note = {Id/No 104470},
 year = {2022},
 language = {English},
 doi = {10.1016/j.geomphys.2022.104470},
 zbMATH = {7485579},
 Zbl = {1487.32155}
}

@article {SfeTo24,
    AUTHOR = {Sferruzza, Tommaso and Tomassini, Adriano},
     TITLE = {{B}ott-{C}hern formality and {M}assey products on strong
              {K}\"ahler with torsion and {K}\"ahler solvmanifolds},
   JOURNAL = {J. Geom. Anal.},
  FJOURNAL = {Journal of Geometric Analysis},
    VOLUME = {34},
      YEAR = {2024},
    NUMBER = {11},
     PAGES = {Paper No. 348, 42},
      ISSN = {1050-6926,1559-002X},
   MRCLASS = {53C55 (22E25 53B35)},
  MRNUMBER = {4799037},
MRREVIEWER = {Antonella\ Nannicini},
       DOI = {10.1007/s12220-024-01764-w},
       URL = {https://doi-org.sire.ub.edu/10.1007/s12220-024-01764-w},
}

@article {Ru25,
    AUTHOR = {Rubini, Lapo},
     TITLE = {Some computations on trivial canonical-bundle solvmanifolds},
   JOURNAL = {J. Geom. Phys.},
  FJOURNAL = {Journal of Geometry and Physics},
    VOLUME = {216},
      YEAR = {2025},
     PAGES = {Paper No. 105586, 30},
      ISSN = {0393-0440,1879-1662},
   MRCLASS = {53C30 (32C35 32C36)},
  MRNUMBER = {4932593},
       DOI = {10.1016/j.geomphys.2025.105586},
       URL = {https://doi-org.sire.ub.edu/10.1016/j.geomphys.2025.105586},
}

@book{Hir03,
 author = {Hirschhorn, Philip S.},
 title = {Model categories and their localizations},
 fseries = {Mathematical Surveys and Monographs},
 series = {Math. Surv. Monogr.},
 issn = {0076-5376},
 volume = {99},
 isbn = {0-8218-3279-4},
 year = {2003},
 publisher = {Providence, RI: American Mathematical Society (AMS)},
 language = {English},
 zbMATH = {1860105},
 Zbl = {1017.55001}
}

@article{Hi16,
 author = {Hinich, Vladimir},
 title = {Dwyer-{Kan} localization revisited},
 fjournal = {Homology, Homotopy and Applications},
 journal = {Homology Homotopy Appl.},
 issn = {1532-0073},
 volume = {18},
 number = {1},
 pages = {27--48},
 year = {2016},
 language = {English},
 doi = {10.4310/HHA.2016.v18.n1.a3},
 zbMATH = {6618788},
 Zbl = {1346.18024}
}

\end{document}